\newcommand{\R}{\mathbb{R}}
\newcommand{\Hk}{H^0(M,L^k)}
\newcommand{\Z}{\mathbb{Z}}
\newcommand{\C}{\mathbb{C}}
\newcommand{\T}{\mathbb{T}^{2m}}
\newcommand{\kahler}{K\"ahler }
\newcommand{\hess}{\nabla^2}
\newcommand{\CP}{\mathbb{CP}}
\newcommand{\bcal}{\mathcal{B}}
\newcommand{\hcal}{\mathcal{H}}
\newcommand{\ocal}{\mathcal{O}}
\newtheorem{theo}{{\sc Theorem}}[section]
\newtheorem{cor}[theo]{{\sc Corollary}}
\newtheorem{lemma}[theo]{{\sc Lemma}}
\newtheorem{prop}[theo]{{\sc Proposition}}
\begin{document}
\title []
{Bergman metrics and geodesics in the space of
K\"{a}hler metrics on principally polarized Abelian varieties}
\author[Renjie Feng]{Renjie Feng}
\address{Department of Mathematics, Northwestern University, USA}
\email{renjie@math.northwestern.edu}
\date{\today}
\maketitle
\begin{abstract}  It's well-known in \kahler geometry that the
infinite dimensional symmetric space  $\hcal$ of smooth \kahler metrics
in a fixed \kahler  class on a polarized \kahler
manifold is well approximated by finite dimensional
submanifolds $\bcal_k \subset \hcal$ of Bergman metrics of height
$k$. Then it's natural to ask whether  geodesics in $\hcal$ can be
approximated by Bergman geodesics in  $\bcal_k$. For any polarized \kahler manifold, the approximation is in the $C^0$ topology. While Song-Zelditch proved the $C^2$ convergence for the torus-invariant metrics over toric varieties. In this article, we show that some $C^{\infty}$ approximation exists as well as a complete asymptotic expansion for principally polarized Abelian varieties. We also get a $C^\infty$ complete asymptotic expansion for harmonic maps into $\bcal_k$ which generalizes the work of Rubinstein-Zelditch on toric varieties.

 \end{abstract}
\section{Introduction}
Let $(M,\omega)$ be an $m$-dimensional polarized \kahler manifold. Then the space $\hcal$ of smooth \kahler metrics in a fixed \kahler class will be an infinite dimensional Riemannian manifold under the natural $L^2$ metric. At the level of individual metrics $\omega \in \hcal$, there exists a well-developed approximation theory \cite{T,Z2}: Given $\omega$, one can define a canonical sequence of Bergman metrics $\omega_k \in \bcal_k$ which approximates $\omega$ in the $C^\infty$ topology. The approximation theory is
based on microlocal analysis in the complex domain, specifically Bergman kernel asymptotics
on and off the diagonal. Our principal aim
is to study the approximation of certain
global aspects of the geometry, such as the approximation of the harmonic maps or geodesics in  $\hcal$ by the corresponding objects in $\bcal_k$.

The geodesic equation
for the \kahler potentials $\phi_t$ of $\omega_t$ is a complex homogeneous Monge-Amp\`{e}re equation \cite{D,S}. Concerning the solution of this Dirichlet problem, we have the following regularity theorem: $\phi_t\in C^{1,\alpha}([0, T] \times M)$ for all $\alpha< 1$ if the endpoint metrics are smooth \cite{C}. It is therefore natural to study the
approximation of Monge-Amp\`{e}re geodesics $\phi_t$ in $\hcal$ by the much simpler geodesics $\phi_k(t, z)$ in $\bcal_k$, which are defined by one parameter subgroups of $GL(d_k + 1)$. The problem of
approximating geodesic segments in $\hcal $ between two smooth endpoints by  geodesic segments in $\bcal_k$
was raised by Arezzo-Tian, Donaldson  and Phong-Sturm \cite{AT, D, PS}). Phong-Sturm  proved that $\phi_k(t, z)\rightarrow \phi_t$ in a weak
$C^0$ sense on $[0, 1]\times M$; a $C^0$ result with a remainder estimate was later proved by
Berndtsson \cite{B}.

To understand the approximation of $\hcal$-geodesics by $\bcal_k$-geodesics better, e.g.,
the rate of the approximation, we can test some special varieties and expect a better result. For example, in the toric varieties case, when one restricts to
torus-invariant metrics, the geodesic equation becomes the real homogeneous Monge-Amp\`{e}re equation and thus can be linearized by the
Legendre transform \cite{S}. Thus the geodesic will be smooth if the endpoints are two smooth metrics. For such geodesics, Song-Zelditch proved a profound $C^2$ convergence in space-time
 derivatives with remainder estimates. In a subsequent paper by Rubinstein-Zelditch \cite{RZ}, it was proved that the harmonic map equation can be
linearized and thus can be solved and that harmonic maps into $\hcal$ are also the $C^2$  limit of the cooresponding ones into $\bcal_k$.

 Our motivation in this article is to test the convergence of geodesics and more general harmonic maps over the principally polarized Abelian varieties by applying the method developed in \cite{RZ, SoZ}. Our main result is that $\phi_k(t,z)\rightarrow \phi_t(z)$ in the $C^\infty$ topology in this Abelian case. Moreover, $\phi_k(t,z)$ has a complete asymptotic expansion in $k$ with the leading term $\phi_t(z)$ and the second term $\log (k^m R_\infty)$ where $R_\infty$ is the ratio of the norming constants (\ref{uyee}). We also test the convergence of the harmonic maps into $\hcal_0^\Gamma$ of $(S^1)^m$-invariant metrics by the corresponding ones into $\bcal_k$ and the convergence is still in the $C^\infty$ topology.



\section{Background}
\subsection{Geodesics in $\hcal$ and $\bcal_k$}Let $M$ be an $m$-dimensional compact \kahler manifold, $L\rightarrow M$ an ample holomorphic
line bundle. Let $h$ be a
smooth hermitian metric on $L$, then $h^k$ will be the
induced metric on $L^k$. The curvature of $h$ is the $(1,1)$-form on $M$ defined locally by the
formula $R(h) =-\frac{\sqrt{-1}}{2} \partial \bar \partial \log |s(z)|^2_h$, where $s(z)$ is a local, nowhere vanishing holomorphic
section \cite{GH}.  If we fix a hermitian metric $h_0$ and let $\omega_0 = R(h_0)$, then we define $\hcal$ as the space of \kahler metrics in the fixed class of $[\omega_0]$:
 \begin{equation} \label{HCALDEF} \hcal\ = \
\{\phi\in C^{\infty} (M) : \omega_\phi\ = \ \omega_0+ \frac{\sqrt{-1}}{2}\partial \bar \partial \phi>0\
\},
\end{equation}where $\phi$ is identified with $h = h_0e^{-\phi}$ so that $R(h)=\omega_{\phi}$.
If we define
the metric $g_{\hcal}$ on $\hcal$ as
\begin{equation} \label{metric} ||\psi||^2_{g_{\hcal}, \phi}\ = \ \int_M |\psi|^2\
\omega_{\phi}^m,\ \, \;\; {\rm ~ where~} \phi \in \hcal
{\rm~ and ~} \psi \in T_{\phi} \hcal \simeq C^{\infty}(M).
\end{equation}
Then formally $(\hcal, g_{\hcal})$ is an infinite
dimensional non-positively curved  symmetric  Riemannian manifold \cite{D, M, S}. Furthermore, the geodesics of $\hcal$ in this metric are the paths $\phi_t$ which satisfy the partial
differential equation: \begin{equation}\label{eeew} \ddot{\phi}-|\partial \dot{\phi}|^2_{\omega_{\phi}}=0.\end{equation}

The space $\hcal$ contains a family of finite-dimensional non-positively curved symmetric
spaces $\bcal_k$ which are defined as follows: Let
  $H^0(M, L^k)$ be the space of  holomorphic
sections of $L^k \to M$ and let $d_k + 1 =
\dim H^0(M, L^k)$.
 For large $k$ and for $\underline{s} = (s_0, ...., s_{d_k})$ an ordered
basis of  $H^0(M, L^k)$, let
\begin{equation}  \iota_{\underline{s}}: M \rightarrow \mathbb{CP}^{d_k},\;\;z \rightarrow [s_0(z),
\dots, s_{d_k}(z)] \end{equation}
be the Kodaira embedding. Then we have a canonical
isomorphism $L^k =  \iota_{\underline{s}}^* O(1)$. We then define a Bergman metric of height $k$ to be a metric of the form:
 \begin{equation} \label{FSDEFa}  FS_k(\underline{s}):= (\iota_{\underline{s}}^*
h_{FS})^{1/k} = \frac{h_0}{\left( \sum_{j = 0}^{d_k}
|s_j(z)|^2_{h_0^k} \right)^{1/k}}, \end{equation} where $h_{FS}$
is the Fubini-Study Hermitian metric on $\ocal(1) \to \CP^{d_k}$. Note that the right side of (\ref{FSDEFa}) is
independent of the choice of $h_0$.
We define the space of Bergman metrics as:
\begin{equation} \label{FSDEFaww}\bcal_k = \ \{FS_k(\underline{s}):  \underline{s} \hbox{\ a basis
of  $H^0(M, L^k) $\}. }\  \
\end{equation}
Then $\bcal_k=GL(d_k+1)/U(d_k+1)$ is a finite-dimensional
negatively curved symmetric space.  It's proved in \cite{T,Z2} that the union ${\mathcal B} = \bigcup_{k=1}^{\infty} \bcal_k$ is dense in $\hcal$ in the $C^{\infty}$ topology : If $h \in \hcal$, then there exists $h(k) \in \bcal_k$ such that $h(k) \rightarrow h$ in $C^{\infty}$ topology.

In fact, there is a canonical choice of the approximating sequence $h(k)$ \cite{T} which is used throughout the article. The hermitian metric $h$ on $L$ induces a natural inner product $Hilb_k(h)$ on $\Hk$ defined by:
\begin{equation}\label{dsldd} \langle s_1, s_2\rangle_{h^k}=\int_M (s_1(z),s_2(z))_{h^k}\frac{\omega_h^m}{m!} ,\; \;\mbox{where}\; \omega_h=R(h),\end{equation} for any $s_1,s_2\in \Hk$. In particular, the norm square of the holomorphic section is:
\begin{equation}\label{dsldddd} \|s\|^2_{h^k}=\int_M |s|^2_{h^k}\frac{\omega_h^m}{m!} \end{equation}
Now choose $\underline{s}(k)$ as an orthonormal basis of $\Hk$ with respect to the inner product $Hilb_k(h)$, then we have the following $C^{\infty}$ asymptotics for the Bergman kernel as $k\rightarrow \infty$ \cite{Z2} (see also \cite{BBS, BS}):\begin{equation} \label{gbvgg}\sum_{j = 0}^{d_k}
|s_j(z)|^2_{h^k}=k^m+a_1(z)k^{m-1}+\cdots ,\end{equation}where $a_1(z)$ is the scalar curvature of $h$. Now let $\underline{\hat{s}}(k)=k^{-\frac{m}{2}}\underline{s}(k)$. Then the Bergman metric $h(k)=FS_k\circ Hilb_k(h):=FS_k(\underline{\hat{s}}(k))$ will be an approximating sequence of $h$; to be more precise, (\ref{FSDEFa}) and (\ref{gbvgg}) imply that for each $r>0$,\begin{equation} \left \|\frac{h(k)}{h}-1\right\|=O(\frac{1}{k^2})\,\,,\left \|\omega(k)-\omega\right\|=O(\frac{1}{k^2})\,\,,\left \|\phi(k)-\phi\right\|=O(\frac{1}{k^2}),\end{equation} where the norms are taken with respect to $C^r(\omega_0)$. Here, as before, $\omega=R(h)$, $\omega(k)=R(k)$, $h=h_0e^{-\phi}$, $h(k)=h_0e^{-\phi(k)}$.

Now we can
compare geodesics in $\hcal$ and Bergman geodesics in $\bcal_k$. Let $h_0,h_1\in\hcal$. Then there will be a unique
$C^{1,\alpha}$ Monge-Amp\`{e}re geodesic $h_t=h_0e^{-\phi_t(z)}$: $[0,1]$$\rightarrow \hcal$ joining
$h_0$ to $h_1$ for all $\alpha\in (0,1)$ \cite{C}. Assume $h_0(k)=FS_k(\hat {\underline{s}}^{(0)}(k))$ and $h_1(k)=FS_k(\hat {\underline{s}}^{(1)}(k))$ are two sequence in $\bcal_k$ obtained by the canonical construction approximating $h_0$ and $h_1$. Then the geodesic joining $h_0(k)$ and $h_1(k)$ in the space $\bcal_k=GL(d_k+1)/U(d_k+1)$ is constructed in \cite{PS} as follows :
 Let $\sigma_k \in GL(d_k+1)$ be the change of basis
matrix defined by
$\sigma_k \cdot \hat{\underline{s}}^{(0)}(k)=\hat {\underline{s}}^{(1)}(k)$. Without loss of
generality, we may assume that
$\sigma_k$ is diagonal with entries
$e^{\lambda_0},...,e^{\lambda_{d_k}}$ for some
$\lambda_j\in\R$. Let $\hat {\underline{s}}^{(t)}(k)=\sigma_k^t\cdot\hat {\underline{s}}^{(0)}(k)$
where $\sigma_k^t$ is diagonal with entries
$e^{\lambda_jt}$. Define
\begin{equation}
\label{phdfcid}
h_k(t,z)=FS_k(\hat {\underline{s}}^{(t)}(k))=h_0e^{-\phi_k(t,z)}.
\end{equation}
Then $h_t(k,z)$ is the smooth geodesic in
$GL(d_k+1)/U(d_k+1)$ joining $h_0(k)$ to
$h_1(k)$. Explicitly, use identity (\ref{FSDEFa}) again, we have:
\begin{equation}
\label{phid}
   \phi_k(t,z)\ = \frac{1}{k}
\log \left(\sum_{j=0}^{d_k}e^{2\lambda_jt}|\hat s_j^{(0)}(k)|^2_{h_0^k}
\right).
\end{equation}
Then the main result of Phong-Sturm \cite{PS} is that the Monge-Amp\`{e}re geodesic $\phi _t(z)$ is approximated by Bergman geodesic $\phi_k(t,z)$
in a weak $C^0$ sense on $[0, 1]\times M$; a $C^0$ result with a remainder estimate was later proved by
Berndtsson \cite{B}.

For special varieties, one expects better results. The first evidence is in \cite{SoZ}:  Song-Zelditch proved the convergence of $\phi_k(t, z) \rightarrow \phi_t(z)$ is much
stronger for toric hermitian metrics on the torus-invariant line bundle over the smooth toric \kahler manifold. To be more precise, define the space
of toric Hermitian metrics:
 \begin{equation} \hcal(\mathbb{T}^m) = \{\phi \in \hcal: (e^{i \theta})^* \phi = \phi, \;\; {\rm ~for~all~} e^{i \theta}
  \in \mathbb{T}^m\} \end{equation}
Then for the smooth geodesic in $\hcal(\mathbb{T}^m) $ with the endpoints $h_0$ and $h_1 \in \hcal(\mathbb{T}^m)$, they proved:
\begin{equation}\lim_{k\rightarrow \infty}\phi_k(t,z)=\phi(t,z) \;\; \mbox{in} \; C^2([0,1]\times M)\end{equation}
And they also obtained the rate of the convergence and the remainder estimates. In fact, their method can be applied to the principally polaried Abelian varieties. In our article, we consider the Abelian case and prove the existence of $C^\infty$ convergence, moreover, we can expand $\phi_k(t,z)$ in $k$ completely with the leading term $\phi_t$.
\subsection{$\Gamma$-invariant space $\hcal_0^\Gamma$}

 Throughout the article, we will use the following notation: denote $\Gamma=(S^1)^m\cong (\R/\Z)^m$, the isomorphism is given by $e^{2\pi i \theta}\rightarrow \theta \mod \Z^m$; thus we can identify a periodic function on $\R^m$ with period 1 in each variable with a function defined on $\Gamma$; denote $y^2=y_1^2+\cdots+y^2_m$ and $x \cdot y=x_1y_1+\cdots+x_my_m$ for $x, y\in \R^m$.

 By performing affine transformation, it suffices to consider the principally polarized Abelian variety $M=\C^m/\Lambda$,
 where $\Lambda=\Z^m+i\Z^m$. We will prove our main result for this model case first and in section \ref{general}, we will sketch how to extend our argument to the general lattice.

  Now for $M=\C^m/\Lambda$, where $\Lambda=\Z^m+i\Z^m$, we can write each point in $M$ as $z=x+iy$, where $x, y \in \R^m$ and they can be considered as the period coordinate in $M$. There is a natural action on $M$: the group $\Gamma=(S^1)^m$ acts on $M$ via translations in the Langrangian subspace $\R^m \subset \C^m$, i.e., the translation of $x$ in the universal covering space.

 Let $L \rightarrow M$ be a principal polarization of $M$; then there exists a hermitian metric defined on $L$ \cite{GH}:$$h=e^{-2\pi y^2}$$ The curvature of $h$ is given by $R(h)=\frac{ \sqrt{-1}}{2}\pi \sum_{\alpha=1}^m dz_{\alpha} \wedge d\bar z_{\alpha}$ which is in the class $[\pi c_1(L)]$. Now fix $\omega_0=R(h)$ a flat metric on $M$ with associated \kahler potential $2\pi y^2$, denote $\hcal_0^{\Gamma}$ as the space of $\Gamma$-invariant \kahler metrics in the fixed class $[\omega_0]$, then:
    $$\hcal_0^{\Gamma}\ = \
\{\psi\in C^{\infty} _{\Gamma}(M) : \omega_\psi\ = \ \omega_0+ \frac{\sqrt{-1}}{2}\partial \bar \partial \psi>0\}.$$
Note that a smooth function $\psi(x,y)$ defined on $M$ invariant under the $\Gamma$ action should be independent of $x$ variable; thus in fact it induces a smooth function on $M/\Gamma$, i.e., $\psi$ can be considered as a smooth and periodic function on the universal covering space $y\in \R^m$.

All hermitian metrics $h$ on $L$ such that $R(h)=\omega_{\psi}\in \hcal_0^{\Gamma}$ are of the form: \begin{equation}h=e^{-2\pi y^2-4\pi \psi(y)}.\end{equation} In section \ref{jhnmb}, we will see such $h$ is a well defined hermitian metric on $L$. And the corresponding \kahler potential is:  \begin{equation}\label{dhgdg}\varphi(y)=2\pi y^2+ 4\pi \psi(y),\end{equation} where $\psi(y)$ is a smooth and periodic function with period $1$ and $\hess \varphi(y)>0$.

The following fact about the space $\hcal_0^{\Gamma}$ is crucial \cite{D,S}: Given any $\varphi_0$ and $\varphi_1$ $\in \hcal_0^{\Gamma}$,
 we can join them by a smooth geodesic $\varphi_t \in \hcal_0^{\Gamma}$.
 Thus throughout the article, we will consider the geodesic in the form $\varphi_t(y)=2\pi y^2+4 \pi \psi_t(y)$.

In section \ref{general}, we show how to get our main results for the case of a general lattice.

\section{Main results}
\subsection{Complete asymptotics of geodesics}
Our main task in this article is to prove the following theorem:
\begin{theo} \label{ghjiuyum}Let $M$ be a principally polarized Abelian variety and let $L \rightarrow M $ be a principal polarization of $ M$. Given $h_0$ and $h_1$ in $\hcal_0^{\Gamma}$ of the space of $\Gamma$-invariant \kahler metrics, let $h_t \in \hcal_0^{\Gamma}$ be the smooth geodesic between them. Let $h_k(t)$ be the Bergman geodesic between $h_0(k)$ and $h_1(k)$ in $\bcal_k$. Let $h_k(t)=e^{-\phi_k(t,z)}h_0$ and $h_t=e^{-\phi_t(z)}h_0$, then, $$\lim_{k\rightarrow \infty}\phi_k(t,z)=\phi_t(z)$$  in the $C^{\infty}([0,1] \times M)$ topology. Moreover, we have the following $C^{\infty}$ complete asymptotics: \begin{equation}\phi_k(t,z)=\phi_t(z)+mk^{-1}\log k+k^{-1}a_1(t,\mu_t)+k^{-2}a_2(t,\mu_t)+\cdots\end{equation} for $k$ large enough, where $\mu_t(y)=\nabla \varphi_t(y)$ where $y$ is defined in (\ref{jhgui}) and each $a_n$ is a smooth function of $\mu_t$ and $t$. In particular, $a_1=\log R_{\infty}$ where $R_{\infty}$ is defined by (\ref{uyee}). \end{theo}
We now sketch the proof of our main result for the model case: define the inner product on $\Hk$ induced by $h_t^k$ in the sense of (\ref{dsldd}), then in Proposition \ref{dfghhg} we first prove that: for any fixed $t$,  the following theta functions of level $k$:
  $$\label {a}\theta_j(z)=\sum_{n \in \Z^m}e^{-\pi \frac{(j+kn)^2}{k} +2\pi i(j+kn)\cdot z} \,\,, j\in (\Z/k\Z)^m$$form an orthogonal basis with respect to this inner product, in particular, $\dim \Hk=k^m$. Therefore, we can choose the orthonormal basis $\underline{s}^{(t)}(k)$  as $\theta_j$ normalized by $\|\theta_j\|_{h_t^k}$. Hence, if $\sigma_k \in GL(k^m)$ such that $\sigma_k \cdot \underline{\hat{s}}^{(0)}(k)=\underline{\hat{s}}^{(1)}(k)$, then $\sigma_k$ can be chosen to be diagonal with entries $e^{\lambda_j}=\|\theta_j\|_{h^k_0}/ \|\theta_j\|_{h^k_1}$. Hence, the equation (\ref{phid}) of the Bergman geodesic becomes:
 \begin{equation}
\label{phdsdid}
   \phi_k(t,z)\ = \frac{1}{k}
\log \sum_{j\in (\Z/k\Z)^m} \left( \frac{\|\theta_j\|^2_{h_0^k}}{\|\theta_j\|^2_{h_1^k}}
\right)^t\frac{|\theta_j|^2_{h_0^k}}{\|\theta_j\|^2_{h_0^k}}.
\end{equation}
Our main theorem is to prove this term converges to $\phi_t(z)$ in the $C^{\infty}([0,1]\times M)$ topology.
But $$\phi_k(t,z)-\phi_t(z)=\frac{1}{k}
\log \sum_{j\in (\Z/k\Z)^m} \left( \frac{\|\theta_j\|^2_{h_0^k}}{\|\theta_j\|^2_{h_1^k}}
\right)^t\frac{|\theta_j|^2_{h_0^k}e^{-k\phi_t}}{\|\theta_j\|^2_{h_0^k}},$$
 denote $\rho _{k}(j,t)=\|\theta_j\|^2_{h_t^k}$ as the norming constant and denote $$R_k(j,t)= \frac{\rho _{k}(j,t)}{(\rho _{k}(j,0))^{1-t}(\rho _{k}(j,1))^t},$$
and as usual $h_t = e^{-\phi_t}h_0$, then we can rewrite
 \begin{equation}\phi_k(t,z)-\phi_t(z)=\frac{1}{k}\log \sum_{j\in (\Z/k\Z)^m}R_k(j,t)\frac{|\theta_j|^2_{h_t^k}}{\|\theta_j\|^2_{h_t^k}}.\end{equation}
Thus our goal is equivalent to prove this term goes to $0$ in the $C^{\infty}$ topology as $k \rightarrow \infty$.
In fact we prove the following result that implies Theorem \ref{ghjiuyum} immediately:
\begin{lemma}\label{oiut} With all assumptions and notations as above, we have:
$$\frac{1}{k} \log\sum_{j\in (\Z/k\Z)^m} R_k(j,t)\frac{|\theta_j|^2_{h_t^k}}{\|\theta_j\|^2_{h_t^k}}=mk^{-1}\log k+\log R_{\infty}(\mu_t,t)+ k^{-2}c_1(\mu_t,t)+\cdots $$ where $\mu_t(y)=\nabla \varphi_t(y)$,  $c_n(\mu_t,t)\in C^\infty(M\times [0,1])$ and periodic in $y$ variables for any fixed $t$ and $R_{\infty}$ is defined by (\ref{uyee}).
Furthermore, this expansion can be differentiated any number of times on both sides with respect to $t$ and $y$ (or $z$).
\end{lemma}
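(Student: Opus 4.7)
The plan is to extract the expansion directly from the explicit formula
\begin{equation*}
\phi_k(t,z) - \phi_t(z) = \tfrac{1}{k}\log\sum_{j\in(\Z/k\Z)^m} R_k(j,t)\,\frac{|\theta_j(z)|^2_{h_t^k}}{\|\theta_j\|^2_{h_t^k}},
\end{equation*}
by combining three ingredients: (a) a complete Laplace expansion for $\rho_k(j,t) := \|\theta_j\|^2_{h_t^k}$; (b) the fact that the geodesic condition forces the leading exponential factor in $R_k(j,t)$ to cancel; and (c) a weighted on-diagonal Bergman expansion that contracts the $j$-sum through the moment map $\mu_t(y) = \nabla\varphi_t(y)$.

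First I would derive a closed integral representation for $\rho_k(j,t)$. Plugging the series for $\theta_j$ into (\ref{dsldd}), the orthogonality of the $x$-characters $e^{2\pi i(n-n')\cdot x}$ kills the off-diagonal terms, and unfolding the $n$-sum against the $\Z^m$-periodicity of $\psi_t$ yields
\begin{equation*}
\rho_k(j,t) = \int_{\R^m} \mathcal{J}_t(y)\,e^{-2\pi k[(y+\mu)^2 + 2\psi_t(y)]}\,dy,\qquad \mu := j/k,
\end{equation*}
where $\mathcal{J}_t(y) = \det(\hess\varphi_t(y))$ is the smooth volume factor from $\omega_t^m/m!$. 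Because $\hess\varphi_t > 0$, the exponent is strictly convex with a unique nondegenerate critical point $y^*(\mu,t)$ depending smoothly on $(\mu,t)$, and Laplace's method then produces a complete asymptotic expansion
\begin{equation*}
\rho_k(j,t) = k^{-m/2}\,e^{-k S_t(\mu)}\bigl(b_0(\mu,t) + k^{-1}b_1(\mu,t) + k^{-2}b_2(\mu,t) + \cdots\bigr),
\end{equation*}
where $S_t(\mu)$ is the value of the exponent at $y^*$ (essentially a Legendre transform of $\varphi_t$) and the $b_n$ are smooth in $(\mu,t)$ and $\Z^m$-periodic in $\mu$. The crucial algebraic point is that in the $\Gamma$-invariant setting the geodesic equation (\ref{eeew}) reduces to a real homogeneous Monge--Amp\`ere equation on the fundamental domain which is \emph{linearized} by the Legendre transform in $y$; hence $S_t(\mu) = (1-t)S_0(\mu) + tS_1(\mu)$. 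The exponential factors then drop out of the ratio and
\begin{equation*}
R_k(j,t) = R_\infty(\mu,t) + k^{-1}R_1(\mu,t) + k^{-2}R_2(\mu,t)+\cdots,
\end{equation*}
with $R_\infty(\mu,t) = b_0(\mu,t)/\bigl(b_0(\mu,0)^{1-t}b_0(\mu,1)^t\bigr)$ smooth in $(\mu,t)$ and periodic in $\mu$.

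Next I would establish a weighted Bergman asymptotic: for any smooth $F(\mu,t)$ periodic in $\mu$,
\begin{equation*}
\sum_{j\in(\Z/k\Z)^m} F(j/k,t)\,\frac{|\theta_j(z)|^2_{h_t^k}}{\|\theta_j\|^2_{h_t^k}} \sim k^m\sum_{n\geq 0}k^{-n}(L_n F)(\mu_t(y),t),
\end{equation*}
with $L_0 = \mathrm{id}$ and $L_n$ smooth differential operators in $\mu$ whose coefficients are smooth in $(y,t)$. This follows from a stationary-phase analysis of the same Gaussian integrals: the normalized coherent state $|\theta_j(z)|^2_{h_t^k}/\rho_k(j,t)$ concentrates as an approximate $\delta$-function in $\mu$ at $\mu=\mu_t(y)$ with width $k^{-1/2}$, and Euler--Maclaurin (or Poisson summation) on the lattice $k^{-1}\Z^m \cap [0,1)^m$ delivers the full expansion. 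Feeding in the expansion of $R_k$, taking $\tfrac{1}{k}\log$ and using $\log(1+x) = x - x^2/2 + \cdots$ then produces the claimed series $mk^{-1}\log k + k^{-1}\log R_\infty(\mu_t,t) + k^{-2}c_1(\mu_t,t) + \cdots$; termwise differentiability in $(t,z)$ propagates from the smooth dependence of all coefficients on the parameters.

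The main obstacle is making each of these steps uniform in $(t,z)\in[0,1]\times M$ with $C^\infty$ control on the remainders under arbitrary $t$- and $z$-derivatives. Pointwise the Laplace and stationary-phase expansions are routine, but uniformity under differentiation requires smooth cutoff arguments showing that the tails away from $y^*(\mu,t)$ are $O(k^{-N})$ in every $C^r$ norm, and that Euler--Maclaurin error terms can be bounded uniformly in the parameters. The abelian structure is what makes this tractable: the theta-function representation renders every integrand an exact Gaussian perturbation, the Hessian of the exponent is uniformly bounded above and below, and the critical points depend smoothly on $(\mu,t)$ without degeneration.
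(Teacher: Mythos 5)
Your proposal follows the same overall decomposition as the paper: (i) a complete Laplace/stationary-phase expansion of the norming constants $\rho_k(j,t)$ from their integral representation, with the Legendre-transform linearization $u_t=(1-t)u_0+tu_1$ along the geodesic forcing the exponential factors to cancel in $R_k(j,t)$ (this is exactly the paper's Regularity Lemma \ref{jhgf}); (ii) a weighted Bernstein-polynomial summation lemma contracting the $j$-sum to a function of $\mu_t(y)=\nabla\varphi_t(y)$ (the paper's Lemma \ref{ddgsgs}); (iii) feeding (i) into (ii), pulling out $k^m R_\infty$, and Taylor-expanding $\frac{1}{k}\log(1+x)$. The one place you genuinely diverge is the proof of (ii): you propose a direct coherent-state concentration argument, treating $|\theta_j|^2_{h^k}/\rho_k(j)$ as an approximate delta function in $j/k$ and applying Poisson summation or Euler--Maclaurin over $k^{-1}\Z^m$, whereas the paper applies the Weyl quantization $Op_k(f)=\sum_n \hat f(n)U^n$ of the translation operator to the off-diagonal Bergman kernel, for which $\theta_j$ is an eigenfunction with eigenvalue $f(-j/k)$, and then reads off the expansion from the Boutet de Monvel--Sj\"ostrand/Zelditch parametrix $\Pi_k(z+x,z)=e^{k(\varphi(z+x,\bar z)-\varphi(z))}A_k(z+x,z)$. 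Your route is viable here precisely because all integrands are explicit Gaussian perturbations, but the paper's quantization route gets the full expansion and its termwise differentiability ``for free'' from the known $C^\infty$ Bergman kernel asymptotics, and avoids having to control Euler--Maclaurin remainders uniformly in $(t,z)$ by hand. Two bookkeeping points to watch: the weight attached to $\theta_j$ concentrates where $j/k\equiv -\mu_t(y)/(4\pi)\pmod 1$ (hence the evaluation at $\mu=-4\pi j/k$ in the paper), so the sign and the $4\pi$-periodicity must be tracked to land on $R_\infty(\mu_t,t)$; and the second term of the final expansion is $k^{-1}\log R_\infty$, which your write-up has correctly (the displayed statement of the lemma omits the factor $k^{-1}$, consistent with Theorem \ref{ghjiuyum} where $a_1=\log R_\infty$ sits at order $k^{-1}$).
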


The proof of Lemma \ref{oiut} is a consequence of the following two facts
\begin{itemize}
\item Regularity: $R_k(j,t)$ admits the complete asymptotics with the leading term given by $R_{\infty}(x,t)$ evaluated at the point $x_0=-\frac{4\pi j}{k}$ (Lemma \ref{jhgf}).

\item The generalized Bernstein Polynomial $\sum_{j\in (\Z/k\Z)^m}f(-\frac{j}{k})\frac{|\theta_j|^2_{h^k}}{\|\theta_j\|^2_{h^k}}$ admits complete asymptotics for any  periodic function $f$ defined on $\R^m$ with period 1 (Lemma \ref{ddgsgs}).
\end{itemize}
\subsection{ Dedekind-Riemann sums }
In section \ref{dddsgg}, we prove the following generalized Bernstein Polynomial Lemma using the basic properties of theta functions and Weyl quantization:
\begin{lemma}\label{ddgsgs}Let $f(x) \in C^{\infty}(\R^m)$ and periodic in each variable with period $1$, let $h\in \hcal_0^{\Gamma}$, then we have the complete asymptotics:
\begin{equation}\label{bvc}\frac{1}{k^m}\sum_{ j\in(\Z/k\Z)^m}f(-\frac{j}{k})\frac{|\theta_j|^2_{h^k}}{\|\theta_j\|^2_{h^k}}=f(\mu)+k^{-1}b_1(\mu)+\cdots\end{equation}
where $\mu(y)=y+\nabla\psi(y)$ and $b_n(\mu)\in C^\infty(\R^m)$ for all $n\in \mathbb{N}$.\end{lemma}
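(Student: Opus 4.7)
The plan is to reduce the sum to Fourier characters using the periodicity of $f$, then to compute each character's contribution explicitly from the theta series via Poisson summation plus Laplace-type stationary phase. The convexity of $\varphi$ (equivalently, that the moment map $\mu(y) = y + \nabla\psi(y)$ is a diffeomorphism of $\R^m$ intertwining $\Z^m$-translations) will guarantee a unique non-degenerate critical point in the phase analysis, and will be the reason the leading term is $f(\mu(y))$.

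First I would expand $f(x) = \sum_{\ell \in \Z^m} \hat f(\ell) e^{2\pi i \ell \cdot x}$, with Schwartz-decay coefficients $\hat f(\ell)$. It then suffices to prove a complete asymptotic expansion for each character $f_\ell(x) = e^{2\pi i \ell \cdot x}$ of the form
\begin{equation*}
\frac{1}{k^m}\sum_{j \in (\Z/k\Z)^m} e^{-2\pi i \ell \cdot j/k}\, \frac{|\theta_j|^2_{h^k}}{\|\theta_j\|^2_{h^k}} = e^{2\pi i \ell \cdot \mu(y)}\Bigl(1 + k^{-1}A_1^{(\ell)}(y) + k^{-2}A_2^{(\ell)}(y) + \cdots\Bigr),
\end{equation*}
where each $A_n^{(\ell)}(y)$ is smooth in $y$ and polynomial in $\ell$. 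Summing in $\ell$ using the rapid decay of $\hat f$ will then produce the stated $b_n(\mu)$, each of which arises as a fixed constant-coefficient differential operator applied to $f$ and evaluated at $\mu(y)$, hence smooth.

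For the character-by-character analysis, one writes explicitly, after completing the square in the theta series,
\begin{equation*}
|\theta_j(z)|^2_{h^k} = e^{-4\pi k\psi(y)} \sum_{p,q \in j+k\Z^m} \exp\!\Bigl(-\tfrac{\pi}{k}\bigl[(p+ky)^2 + (q+ky)^2\bigr]\Bigr) e^{2\pi i(p-q)\cdot x}.
\end{equation*}
Integrating in $x \in [0,1]^m$ kills the off-diagonal $p \ne q$ terms, so the norming constant $\|\theta_j\|^2_{h^k}$ reduces (up to a volume-form factor involving $\det \hess \varphi$) to a $y$-integral of the diagonal Gaussian-theta sum, whose Laplace asymptotics produces a complete expansion centered at the $y_j$ with $\mu(y_j) \equiv -j/k \pmod{\Z^m}$. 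For the weighted sum, I would use $p \equiv j \pmod k$ to rewrite $e^{-2\pi i \ell \cdot j/k} = e^{-2\pi i \ell \cdot p/k}$ and change variables $q = p - ks$, collapsing the combined sum over $j$ and $(n, n')$ into a double sum over $(p, s) \in \Z^m \times \Z^m$. Poisson summation in $p$ then shows that only the zero dual frequency produces an asymptotic expansion, with non-zero dual frequencies and all $s \ne 0$ terms giving $O(e^{-ck})$ errors uniform on $M$ because of the Gaussian. Stationary phase in $p$ around $p = -k\mu(y)$ delivers the claimed expansion with leading term $e^{2\pi i \ell \cdot \mu(y)}$.

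The principal obstacle is carrying out the Poisson summation plus stationary phase uniformly in $\ell$: large $\ell$ introduces rapid oscillation in the phase, and one must show that the non-stationary Poisson dual modes and the $s \ne 0$ terms remain exponentially suppressed in $k$ even in its presence, while tracking the precise coefficients so that the expansion can be differentiated to any order in $y$. Once the uniform-in-$\ell$ expansion is in hand, the Schwartz decay of $\hat f(\ell)$ absorbs the polynomial $\ell$-growth of the coefficients, and the sums $\sum_\ell \hat f(\ell) \ell^\alpha e^{2\pi i \ell \cdot \mu(y)}$ are recognized as derivatives of $f$ evaluated at $\mu(y)$, yielding the smoothness of $b_n(\mu)$ on all of $\R^m$.
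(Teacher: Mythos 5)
Your strategy is sound and, after the common first step, genuinely different from the paper's. Both arguments begin by expanding $f$ in Fourier characters and reducing to a single mode $e^{2\pi i\ell\cdot x}$. The paper then packages the character sum as $\frac{1}{k^m}Op_k(f)\Pi_k(z+x,z)|_{x=0}$, where $Op_k(f)=\sum_n\hat f(n)U^n$ is a Weyl quantization built from the translation operator $U:g(x)\mapsto g(x-\tfrac1k)$, which acts on $\theta_j$ with eigenvalue $e^{-2\pi ij/k}$; the expansion then falls out of the known off-diagonal Bergman kernel asymptotics $\Pi_k(z-\tfrac nk,z)=e^{k(\varphi(z-\frac nk,\bar z)-\varphi(z))}A_k$, made explicit here because the almost-analytic extension of a potential depending only on $y$ is just $\varphi(\tfrac{z-\bar w}{2i})$. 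You instead re-derive the needed asymptotics by hand: completing the square in the theta series, collapsing the sum over $j$ and the two lattice translations into a sum over $(p,s)$, and then running Poisson summation plus Laplace asymptotics. This is more self-contained (no appeal to the Bergman kernel parametrix) and your flat-case leading term checks out, with the shift from $y$ to $\mu(y)$ correctly attributed to the $j$-dependence of the norming constants. Two caveats. First, the uniformity in $\ell$ that you flag as the principal obstacle is real, but it is no worse than what the paper silently assumes when it interchanges $\sum_n\hat f(n)$ with the $k$-expansion of $e^{k(\varphi(y-\frac{n}{2ik})-\varphi(y))}A_k$; in both treatments one should truncate to $|\ell|\lesssim k^{\epsilon}$ and absorb the tail using the rapid decay of $\hat f$, accepting $O(k^{-\infty})$ rather than $O(e^{-ck})$ errors once the merely smooth (non-analytic) weight $1/\|\theta_j\|^2$ enters the Poisson dual modes. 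Second, the resulting operators producing $b_n$ will generally have $y$-dependent (not constant) coefficients, through derivatives of $\psi$ and of the Legendre transform, but this does not affect the claimed smoothness.
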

The generalized Bernstein polynomial Lemma \ref{ddgsgs} has an application to Dedekind-Riemann sums for the periodic functions.
Results about the complete asymptotics of Dedekind-Riemann sums for the smooth functions with compact support over the polytope $P$ were obtained by Brion-Vergne, Guillemin-Sternberg and many others (cf. \cite{BV, GS}). For purposes of comparison, Theorem 4.2 of \cite{GS} states that for $f\in C^{\infty}_{0}(\R^n)$:
$$\frac{1}{k^m}\sum _{\alpha\in \Z^m \cap kP}f  (\frac{\alpha}{k})=\left(\sum_F\sum_{\gamma \in \Gamma_{F}^{\sharp}}\tau _{\gamma}\left(\frac{1}{k}\frac{\partial}{\partial h}\right)\int_{P_{h}}f(x)dx \right)\mid_{h=0}$$
where $\alpha$ is the lattice point in the $kth$ dilate of the polytope $kP$ and $P_h$ is a parallel dilate of $P$. We refer to \cite{GS} for more details.

Afterward,  Zelditch related the Bernstein polynomials to the Bergman kernel for the Fubini-Study metric on $\CP^1$, and generalized this relation to any compact \kahler toric manifold, implying many interesting results \cite{Z1}. To be more precise, let $(L,h)\rightarrow (M,\omega)$ be a toric Hermitian invariant line bundle over a \kahler toric manifold with associated moment polytope $P$, he proved the following complete asymptotics:
 \begin{equation}\sum_{\alpha\in \Z^m \cap kP}f(\frac{\alpha}{k})\frac{|s_{\alpha}|^2_{h^k}}{\|s_{\alpha}\|^2_{h^k}}=f(x)+k^{-1}\mathcal{L}_1f(x)+k^{-2}\mathcal{L}_2f(x)+\cdots \end{equation} where $f\in C_{0}^{\infty}(\R^m)$, each $\mathcal{L}_j$ is a differential operator of order $2j$, $s_{\alpha}$ is the orthogonal basis of $\Hk$ which in fact are monomials $z^{\alpha}$. Then the simple integration yields:
 \begin{equation}\frac{1}{k^m}\sum_{\alpha \in \Z^m\cap kP }f(\frac{\alpha}{k})=\int_{P}f(x)dx+ \frac{1}{2k}\int_{\partial P}f(x)dx+\frac{1}{k^2}\int_{P}\mathcal{L}_2f(x)dx+\cdots\end{equation}
In \cite{F}, this method is then generalized to the polyhedral set.

In section \ref{dddsgg}, we will first generalize the method in \cite{F,Z1} to Abelian varieties to get the Lemma \ref{ddgsgs}. If we take the integral over $M$ on both sides of (\ref{bvc}) and note $\sum_{j\in(\Z/k\Z)^m}f(\frac{j}{k})=\sum_{j\in(\Z/k\Z)^m}f(-\frac{j}{k})$, then we have the following Dedekind-Riemann sums for periodic functions:
\begin{cor}\label{iuu} Let $f(x) \in C^{\infty}(\R^m)$ and periodic in each variable with period 1, then:
\begin{equation}\label{bvsssc}\frac{1}{k^m}\sum_{j\in(\Z/k\Z)^m}f(\frac{j}{k})=\int _{[0,1]^m}f(x)dx+ k^{-1}\int _{[0,1]^m}b_1(x)dx+\cdots
\end{equation} where each $b_n(x)\in C^\infty(\R^m)$ and can be computed explicitly.
\end{cor}

  \subsection{Complete asymptotics of harmonic maps} A harmonic map between two Riemannian manifolds $(N_1, g_1)$ and $(N_2, g_2)$
is a critical point of the energy functional
$$E(f) = \int_{N_1} |df|^2_{g_1\otimes f^*g_2}dVol_{g_1}$$
on the space of smooth maps $f: N_1 \rightarrow N_2$. Note that this notion may
also be defined when the target manifold $(N_2, g_2)$ is an infinite-dimensional
weakly Riemannian manifold, e.g., $(\hcal, g_{\hcal})$. By a smooth map $f$ from $N$ to $\hcal$ we mean a function
$f\in C^\infty(N \times M)$ such that $f(q, \cdot) \in \hcal$ for each $q \in N$ (see Definition 1.1 in \cite{RZ}).

In \cite{RZ}, Rubinstein-Zelditch
proved that, in the toric case, the Dirichlet problem for
a harmonic map $\varphi: N \rightarrow \hcal(\mathbb{T}^m)$ of any compact Riemannian manifold  $N$ with
smooth boundary into $\hcal(\mathbb{T}^m)$ of toric invariant metrics  admits a smooth solution that
may be approximated in $C^2(N\times M)$ by a special sequence of harmonic maps
$\varphi_k : N \rightarrow \bcal_ k(\mathbb{T}^m) \subset \hcal(\mathbb{T}^m)$ into the subspaces $\bcal_ k(\mathbb{T}^m)$ of Bergman
metrics (Theorem 1.1 in \cite{RZ}). This generalized the work of Song-Zelditch in the case of geodesics, i.e., where $N = [0, 1]$.

In the spirit of \cite{RZ}, we consider the harmonic maps into the space of $\hcal_0^\Gamma$ of $\Gamma$-invariant Abelian metrics .
 Then we can prove that the approximation of the harmonic into $\hcal_0^\Gamma$ by the corresponding ones into $\bcal_k$ is still $C^\infty$. 

\begin{theo}\label{harmin}
Let $M$ be a principally polarized Abelian variety and let $L\rightarrow M$ be a principal polarization. Let
$(N, g)$ be a compact oriented smooth Riemannian manifold with smooth
boundary $\partial N$. Let $\psi: \partial N \rightarrow \hcal_0^\Gamma$ denote a fixed smooth map. There
exists a harmonic map $\varphi:N \rightarrow \hcal_0^\Gamma$ with $\varphi|_{\partial N} = \psi$ and harmonic maps
$\varphi_k : N \rightarrow \bcal_k$ with $\varphi_k |_{\partial N} = FS_k \circ Hilb_k(\psi)$, then we have the following $C^\infty$ complete asymptotics, $$\varphi_k=\varphi+mk^{-1}\log k+k^{-1}a_1+k^{-2}a_2+\cdots$$
where each $a_n$ is smooth and $a_1=\log K_\infty$ where $K_\infty$ is defined by (\ref{ddgssd}).
\end{theo}

The proof of Theorem \ref{harmin} is similar to the one in \cite{RZ}. In section \ref{testharmonic}, we will sketch the main steps of the proof for the model case.

\subsection{Final remarks}\label{test}
The $C^2$ convergence of Song-Zelditch for the toric varieties can be improved to the $C^{\infty}$ convergence for the Abelian varieties mainly because of the Regularity Lemma \ref{jhgf}: $R_k(j,t)$ admits complete asymptotics. But for the toric case, they do not know the existence of the complete asymptotics of $R_k(\alpha,t)$, where $\alpha$ is a lattice point in $P$ which is the image of the moment map of toric varieties $\nabla_{\rho} \varphi : M \rightarrow P$. In fact, they have the following lemma: $$(\frac{\partial}{\partial t})^n R_k(\alpha,t)=(\frac{\partial}{\partial t})^nR_{\infty}(\frac{\alpha}{k},t)+O(k^{-\frac{1}{3}})\,\,,0 \leq n\leq 2.$$ They can not prove the existence of complete asymptotics because they can not get the joint asymptotics of $k$ and $\alpha$ of the norming constant $\rho_{k}(\alpha)=\|s_{\alpha}\|_{h^k}^2$, where $s_{\alpha}$ is the holomorphic section of the invariant line bundle. Recall that the boundary of $P$ is the image of the points with isotropy group of $\mathbb{T}^n$, $1 \leq n\leq m$ under the moment map $\nabla_{\rho}\varphi$ and the boundary causes serious complications. To be more precise, they can rewrite $\rho_{k}(\alpha)$ as:
  $$\rho_{k}(\alpha)=\int_P e^{-k(u_{\varphi}(x)+\langle\frac{\alpha}{k}-x, \nabla u_{\varphi}(x)\rangle)}dx$$ where $u_{\varphi}$ is the symplectic potential defined on $P$, i.e., the Legendre transform of \kahler potential $\varphi$.  Note that the critical point of the phase is given by $\frac{\alpha}{k}$; thus they can get complete asymptotics by the stationary phase method when the point $\frac{\alpha}{k}$ is far away from the boundary of $P$. But they can not get joint asymptotics by this method when the point goes to the boundary $\partial P$ as $k \rightarrow \infty$ \cite{SoZ}.

   But in our Abelian case,  we do not have such disadvantage. There is a real torus $\Gamma=(S^1)^m$ action on the Abelian varieties. This action is free, i.e., there is no point with the isotropy group of $(S^1)^n$, $1\leq n\leq m$. In section \ref{dghdSg}, we will see that the gradient of the \kahler potential induces a map $\nabla \varphi_t=4\pi(y+\nabla \psi_t): M \rightarrow M/\Gamma$ which is in fact a Lie group valued moment map for any fixed $t$. The image of $\nabla \varphi_t $ is $M/\Gamma$ which has no boundary. There is another way to look at this, in section \ref{gnbvgi}, we rewrite  $\rho_{k}(j)=\|\theta_j\|_{h^k}^2$ as an integral over the universal covering space $\R^m$ (\ref{dhghgndh}):
   $$\rho_{k}(j)=e^{-2\pi \frac{j^2}{k}}\int_{\R^m}e^{-k\pi (-u(x)+\langle x+\frac{4\pi j}{k}, \nabla u(x) \rangle)}dx$$
  where $u(x)$ is defined by Legendre transform of $\varphi$, thus we can apply the stationary phase method to this integral everywhere.

 For example, in section \ref{jhnmb}, we can get identity (\ref{fdsfs}) which is the exact formula for $\rho_k(j)=\|\theta_j\|_{h^k}^2$. If we assume $\psi\equiv 0$, i.e., we choose the flat metric over the Abelian variety, then $\|\theta_j\|^2_{h^k}$ will be a constant independent of $j$, i.e., the joint complete asymptotics of $\rho_k(j)$(which is in fact a constant) exist for any $j$ as $k \rightarrow \infty$. This is totally different from the toric case. For example, consider $(\mathbb{CP}^1, \omega_{FS})$ with Fubini-Study metric, then $\|z^{\alpha}\|^2_{h^k_{FS}}=$${k \choose \alpha}$$^{-1}$, but as proved in \cite{SoZ1}, for any $\alpha\in [k^{-\frac{3}{4}}, 1-k^{-\frac{3}{4}}]$, by stationary phase method:
   $${k \choose k\alpha} \sim \frac{1}{\sqrt{2\pi k \alpha (1-\alpha)}}e^{(\alpha \log \alpha +(1-\alpha)\log(1-\alpha))}$$
   Then it's easy to see that the asymptotics are highly non-uniform as $\alpha \rightarrow 0$ or $\alpha \rightarrow 1$, where $0$ and $1$ are two boundary points of the moment polytope $[0,1]$ of $\mathbb{CP}^1$.
\bigskip

\noindent{\bf Acknolwedgements:} The author would like to thank Prof. S. Zelditch for his support of this project. He would like to thank Dr. Z. Wang for many helpful discussions. Many thanks go to Dr. Y. A. Rubinstein for discussing the problem and sharing many of his fresh ideas, for reading the first version line by line, pointing out mistakes and typos and giving many suggestions about how to write the article. The author also would like to thank the referee for many helpful comments in the original version. This paper will never come out without their helps.

\section{Abelian varieties and Theta functions}\label{jhnmb}
In this section, we will review some basic properties of principally polarized
Abelian varieties and theta functions, we mainly follow \cite{FMN}, refer to \cite{GH, Mu} for more details.

Let $V$ be a $m$-dimensional complex vector space and $\Lambda \cong \Z^{2m}$ a maximal
lattice in $V$ such that the quotient
$M = V/\Lambda$ is an Abelian variety, i.e., a complex
torus which can be holomorphically embedded in projective space.  We
assume that $M$ is endowed with a principal polarization, then we can always find a basis $\lambda_1,...,\lambda_{2m}$ for $\Lambda$, such that $\lambda_1,...,\lambda_{m}$ is a basis of $V$ and
$$\lambda _{m+\alpha}=\sum_{\beta =1}^mZ_{\beta \alpha}\lambda_{\beta}, \,\, \alpha=1,...,m$$ where $Z=(Z_{\alpha\beta})_{\alpha,\beta=1}^{m}$is a $m\times m$ matrix satisfies $Z^T=Z$ and $Im Z>0$. Conversely,
principally polarized Abelian varieties are parametrized by such matrices.

Let $x_1,...,x_m,y_1,..., y_m$ be the coordinates on $V$ which are dual to the generators $\lambda_1,...,\lambda_{2m}$ of the lattice $\Lambda$. Then $x_{\alpha}$ and $y_{\alpha}$ can also be considered as periodic coordinates in $M$, and are related to the complex ones by:
\begin{equation}\label{jhgui}z_{\alpha}=x_{\alpha}+\sum _{\beta=1}^mZ_{\alpha \beta}y_{\beta}\,\,\,,\bar z_{\alpha}=x_{\alpha}+\sum _{\beta=1}^m \bar Z_{\alpha \beta}y_{\beta}.\end{equation}

Let $L \rightarrow M$ be the holomorphic line bundle, if we further assume $L$ is a principal polarization of $M$, then the first Chern class $c_1(L)$ is given by: \begin{equation}\begin{array}{lll} \label{poiubvy}\omega_0
& = & \sum_{\alpha=1}^m dx_{\alpha}\wedge dy_{\alpha}
 \\ && \\
& =& \frac{\sqrt{-1}}{2}\sum_{\alpha, \beta}(Im Z)^{\alpha \beta}dz_{\alpha} \wedge d \bar z_{\beta}. \end{array} \end{equation} The space $\Hk$ is naturally isomorphic
with the space of holomorphic functions $\theta$ on $V$ satisfying:
\begin{equation}\theta(z+\lambda_{\alpha})=\theta(z), \,\,\, \theta(z+\lambda_{m+\alpha})=e^{-2k\pi iz_{\alpha}-k\pi i Z_{\alpha \alpha}}\theta(z).\end{equation}
In fact, these theta functions are in form \cite{FMN}:
$$\theta(z)=\sum_{l\in (\Z/k\Z)^m}a_l\theta_l( z,\Omega),$$
where \begin{equation}\label{dsbn}\theta_l(z,\Omega )=\sum_{n\in \Z^m}e^{\pi i(l+kn) \frac{Z}{k}(l+kn)^T}e^{2\pi i(l+kn)\cdot z}, \,\,\, l\in  (\Z/k\Z)^m .\end{equation}
In particular, dim $\Hk$$=k^m$.

Now consider the hermitian metric $h$ defined on $L$, $h$ should be a positive $C^{\infty}$ function of $z$ satisfying:
\begin{equation}\label{iuytreeth}h(z)|\theta(z)|^2=h(z+\lambda)|\theta(z+\lambda)|^2\end{equation} for any $\lambda \in \Lambda$; thus \begin{equation}\label{iuyth} h(z+\lambda_{\alpha})=h(z)\,\,, h(z+\lambda_{m+\alpha})=|e^{2\pi i z_{\alpha}}|^2|e^{\pi i Z_{\alpha \alpha}}|^2h(z).\end{equation}
Conversely, any such function $h$ defines a metric on $L$.
\bigskip

For simplicity, we first consider the Abelian variety $M=\C^m/\Lambda$, where $\Lambda=\Z^m+i\Z^m$. Write $z=x+iy$, where $x$ and $y \in \R^m$ and can be viewed as the periodic coordinate of $M$. Let $L \rightarrow M$ be a principal polarization of $M$, then by formula (\ref{dsbn}), the global holomorphic section of $H^0(M,L)$ is given by the following Riemann theta functions:
\begin{equation}\label{oiuy} \theta(z)=\sum_{n\in \Z^m}e^{-\pi n^2 +2\pi i n\cdot z}, \end{equation}
where $n^2=n_1^2+\cdots+n_m^2$ and $n\cdot z=n_1z_1\cdots+n_mz_m$. And  the global holomorphic section of $\Hk$ is given by:
\begin{equation}\label {a}\theta_j(z)=\sum_{n \in \Z^m}e^{-\pi \frac{(j+kn)^2}{k} +2\pi i(j+kn)\cdot z} ,\,\,\, j\in(\mathbb{Z}/k\mathbb{Z})^m \end{equation}
with dim$\Hk$$=k^m$. Furthermore, $\theta_j(z)$ are holomorphic functions over $\C^m$ and satisfy the following quasi-periodicity relations:
\begin{equation}  \theta_j(z_{\alpha}+1)=\theta_j(z_{\alpha}) ,\,\,\,\, \theta_j(z_{\alpha}+i) = e^{-2\pi ikz_{\alpha}+k\pi}\theta_j(z_{\alpha}).
 \end{equation}
Now define the hermitian metric on $L$ as  $$h_t= e^{-2\pi y^2-4\pi\psi_t(y)},$$
where $\psi_t(y)$ is a smooth and periodic function of $y\in \R^m$ with period $1$. It's easy to check $h_t$ satisfies conditions (\ref{iuyth})
$$h_t(z_{\alpha}+1)=h_t(z_{\alpha})\,\,,h_t(z_{\alpha}+i)=|e^{2\pi i z_{\alpha}}|^2e^{2\pi}h_t(z_{\alpha}),$$ thus $h_t$ is a well defined hermitian metric on $L$.

Now in our case, the natural Hermitian inner product (\ref{dsldd}) defined on the space $\Hk$ reads:
\begin{equation}\label {acbn}\langle \theta_l,
\theta_j\rangle_{h_t^k}= \int_{M} \theta_l(z) \overline{ \theta_j(z)}e^{-2k\pi y^2-4k\pi\psi_t(y)}\frac{\omega_{h_t}^m}{m!} ,\end{equation}
where the volume form $\frac{\omega_{h_t}^m}{m!}=  (4\pi)^m\det(I+\hess \psi_t)dxdy$.

\begin{prop}\label{dfghhg}$\left\{\theta_j\,\,,j \in(\Z/k\Z)^m \right \}$ forms an orthogonal basis of $\Hk$ with respect to the Hermitian inner product defined by (\ref {acbn}). \end{prop}
\begin{proof} By definition,
\begin{equation} \begin{array}{lll} \label{poiuy}\langle\theta_l, \theta_j\rangle_{h_t^k}
& = & (4\pi)^m \int _{[0,1]^m} \int _{[0,1]^m}[\sum_{n \in\Z^m}e^{-\pi \frac{(l+kn)^2}{k} +2\pi i(l+kn)\cdot z} ] \cdot
\\ && \\
&  & [\sum_{p \in \Z^m}e^{-\pi \frac{(j+kp)^2}{k} -2\pi i(j+kp)\cdot \bar z}] e^{-2k\pi y^2-4k\pi\psi_t(y)}\det(I+\hess \psi_t)dxdy  \\ && \\
& = & (4\pi)^m  [\sum_{n\in\Z^m}\sum_{p\in\Z^m}\int _{[0,1]^m}e^{ 2\pi i (l+kn-j-kp)\cdot x}dx] \cdot \\ && \\
& &  [\int _{[0,1]^m}e^{-\pi \frac{(l+kn)^2+(j+kp)^2}{k}-2\pi (l+kn+j+kp)\cdot y-2k\pi y^2-4k\pi\psi_t}\det(I+\hess \psi_t) dy]\end{array} \end{equation}
For the first integral, if $l_{\alpha}+kn_{\alpha}=j_{\alpha}+kp_{\alpha}$, i.e., $l_{\alpha}-j_{\alpha}=0 \mod k$, then $$\int _{[0,1]}e^{ 2\pi i (l_{\alpha}+kn_{\alpha}-j_{\alpha}-kp_{\alpha})x_{\alpha}}dx_{\alpha}=1,$$ otherwise, it's 0.
Since $1 \leq l_{\alpha}, j_{\alpha}\leq k$, hence  $l_{\alpha}+kn_{\alpha}=j_{\alpha}+kp_{\alpha}$ iff $l_{\alpha}=j_{\alpha}$ and $p_{\alpha}=n_{\alpha}$; thus the first integral is nonzero iff $l=j$ and $n=p$. Then equation (\ref{poiuy}) becomes:
 $$\langle\theta_l, \theta_j\rangle_{h_t^k} =(4\pi)^m \delta_{l,j}\sum_{n\in \Z^m}\int _{[0,1]^m}e^{-2k\pi(\frac{j}{k}+n+y)^2}e^{-4k\pi \psi_t(y)}\det(I+\hess \psi_t) dy.$$
 Hence, we can see that $\left\{ \theta_j \,\,, j\in (\Z/k\Z)^m\right\}$ forms an orthogonal basis of $\Hk$. \end{proof} Furthermore, we have: \begin{equation} \label{fdsfs} \begin{array}{lll} \|\theta_j\|^2_{h_t^k}&=&(4\pi)^m \sum_{n\in \Z^m}\int _{[0,1]^m}e^{-2k\pi(\frac{j}{k}+n+y)^2}e^{-4k\pi \psi_t(y)}\det(I+\hess \psi_t) dy
  \\ && \\
&= &
 (4\pi)^m  \int _{\R^m}e^{-2k\pi (y+\frac{j}{k})^2}e^{-4k\pi \psi_t(y)}\det(I+\hess \psi_t) dy.\end{array} \end{equation}
In the last step, we change variable $y\rightarrow y+n $ and use the fact that $\psi_t(y)$ is a smooth and periodic function with period 1. In fact, this integral is taken over the universal covering space $\R^m$.

\section{Regularity lemma}
\subsection{$\Gamma$-invariant metrics and geodesics}\label{dghdSg}
In this subsection, we recall some basic properties of the space $\hcal^{\Gamma}_0$ of $\Gamma$-invariant \kahler metric proved in \cite{D}.

 Now consider $M=\C^m/\Lambda$ where $\Lambda=\Z^m+i\Z^m$, we write each point in $M$ as $z=x+iy$, where $x$ and $y \in \R^m$ and can be considered as periodic coordinate in $M$. Let $\omega_0=\frac{\pi \sqrt{-1}}{2} \sum_{\alpha=1}^m dz_{\alpha} \wedge d\bar z_{\alpha}$ be the flat metric with associated local \kahler potential $2\pi y^2$. The group $\Gamma=(S^1)^m$ acts on $M$ via translations in the Langrangian subspace $\R^m \subset \C^m$, and this induces an isometric action of $\Gamma$ on the space $\hcal$ of \kahler metrics on $M$; so $\hcal_0^{\Gamma}$ of $\Gamma$-invariant metrics is totally geodesic in $\hcal$. Furthermore, $\hcal_0^{\Gamma}$ can be viewed as the set of functions:
     \begin{equation}\hcal_0^{\Gamma}\ = \
\{\psi\in C^{\infty}_{\Gamma} (M) : \omega_\psi\ = \ \omega_0+ \frac{\sqrt{-1}}{2}\partial \bar \partial \psi>0\}.\end{equation} In fact, a function invariant under the action of $\Gamma$ is independent of $x$; thus it descends to a smooth function on $M/\Gamma$, i.e., smooth and periodic function with period $1$ defined on $y\in \R^m$.

 The crucial point about $\hcal_0^{\Gamma}$ is: Given any two points $\varphi_0$ and $\varphi_1$ in  $\hcal_0^{\Gamma}$, there exists a smooth geodesics $\varphi_t(y)$ in $\hcal_0^{\Gamma}$ joining them. To be more precise, in the local coordinate, the geodesic is given by the path $\varphi_t(z)=2\pi y^2+ 4\pi \psi_t(y)$ satisfying the condition: \begin{equation}\label{iupoj} \ddot{\varphi} -\frac{1}{2}|\nabla \dot{\varphi}|^2_{\omega_{\psi}}=0\end{equation} Moreover, $\hess \varphi_t=I+\hess \psi_t>0$ because of the positivity of \kahler form; thus $\varphi_t$ is a convex function on $\R^m$.  Then the Legendre transform of $\varphi_t(y)$ \begin{equation}\label{cx}u_t(\mu)=\mu\cdot y-\varphi_t(y) \end{equation}is well defined
where \begin{equation}\label{css}\mu=\nabla \varphi_t=4\pi(y+\nabla \psi_t(y)).\end{equation} For any fixed $t$, the map $\mu(y,t)=\nabla \varphi_t: \R^m \rightarrow \R^m$ and also induces a map $\mu : M \rightarrow M/\Gamma$ which is an example of a Lie group valued moment map. Following the same proof in \cite{D, G, R}, we have:
\begin{prop}\label{jghnbv}$u(t,\mu)$ is linear along the geodesic (\ref{iupoj}). \end{prop}

According to this Proposition, we can solve equation (\ref{iupoj}) in $\hcal^{\Gamma}_0$ as follows: given any two \kahler potential $\varphi_0$ and $\varphi_1$, make the Legendre transform $u_0=\mathcal{L}\varphi_0$ and $u_1=\mathcal{L}\varphi_1$, then \begin{equation}\label{linar}u_t=(1-t)u_0+tu_1\end{equation} solve equation $\ddot{u}=0$; thus the inverse of Legendre transform  \begin{equation}\varphi_t=\mathcal{L}^{-1}u_t\end{equation} will solve equation (\ref{iupoj}) which is $C^{\infty}$.
\subsection{Regularity Lemma } \label{gnbvgi}
Denote $u(t, \mu)=\mathcal{L} \varphi_t(y)$ as the Legendre transform of $\varphi_t(y)$ for any fixed $t$. 
By properties of Legendre transform, we have: \begin{equation}\label{sdsds}y=\nabla_\mu u,\end{equation} \begin{equation}\label{poijk} \frac{\partial y}{\partial \mu}=(\hess_y \varphi)^{-1}(y)=\frac{1}{4\pi}(1+\hess \psi_t)^{-1}(y)>0.\end{equation}

Let $\rho _{k}(j,t)=\|\theta_j\|^2_{h_t^k}$ denote the norming constant. Define \begin{equation} \label{utye} R_k(j,t)= \frac{\rho _{k}(j,t)}{(\rho _{k}(j,0))^{1-t}(\rho _{k}(j,1))^t},\end{equation}
\begin{equation}\label{uyee} R_{\infty}(\mu,t)=(\frac{\det \hess_\mu u}{(\det \hess_\mu u_{0})^{1-t}(\det \hess_\mu u_{1})^t})^{1/2} .\end{equation} 
We have following regularity lemma:
\begin{lemma}\label{jhgf} We have the following complete asymptotics, 
$$R_{k}(j,t)=  R_{\infty}(\mu,t)(1+k^{-1}a_1+k^{-2}a_2+ \cdots +k^{-\nu}a_{\nu})|_{\mu=-\frac{4\pi j}{k}}+O(k^{-\nu-1})$$
where $\nu$ is any positive integer and $O$ symbol is uniform in $t$. Moreover, $R_{\infty}(\mu,t)$ and each $a_{\nu}$ are smooth functions of $(\mu,t)$ and $4\pi$- periodic in $\mu$ for any fixed $t$.
\end{lemma}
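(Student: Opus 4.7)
The plan is to derive the asymptotics directly from the exact Gaussian integral representation (\ref{fdsfs}). Setting $\mu_* := -4\pi j/k$, one rewrites
$$\rho_k(j,t) = (4\pi)^m \int_{\R^m} e^{-k\Phi_t(y)}\det(I+\hess\psi_t(y))\,dy,$$
where the phase
$$\Phi_t(y) = 2\pi(y+j/k)^2 + 4\pi\psi_t(y) = \varphi_t(y) - \mu_*\!\cdot\! y + \tfrac{\mu_*^2}{8\pi}$$
is smooth and strictly convex since $\hess\varphi_t = 4\pi(I+\hess\psi_t)>0$. Its unique critical point $y_*(\mu_*,t)$ satisfies $\nabla\varphi_t(y_*) = \mu_*$, which by Legendre duality is exactly $y_* = \nabla_\mu u_t(\mu_*)$; the critical value is $\Phi_t(y_*) = -u_t(\mu_*) + \mu_*^2/(8\pi)$, and by (\ref{poijk}) the Hessian $\hess\Phi_t(y_*) = 4\pi(I+\hess\psi_t(y_*))$ has inverse $4\pi\hess_\mu u_t(\mu_*)$.

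Next I apply Laplace's method to obtain the complete asymptotic expansion
$$\rho_k(j,t) = \Bigl(\tfrac{2\pi}{k}\Bigr)^{m/2}\bigl(\det\hess_\mu u_t(\mu_*)\bigr)^{-1/2}\,e^{-k\Phi_t(y_*(\mu_*,t))}\bigl(1 + k^{-1}b_1(\mu_*,t) + k^{-2}b_2(\mu_*,t) + \cdots\bigr),$$
where each $b_n$ is a universal smooth polynomial expression in the higher derivatives of $\Phi_t$ and of $\log\det(I+\hess\psi_t)$ at $y_*(\mu_*,t)$. The $1$-periodicity of $\psi_t$ in $y$, together with the covariance $y_*(\mu_*+4\pi e_\alpha,t) = y_*(\mu_*,t)+e_\alpha$, gives $\rho_k(j+ke_\alpha,t)=\rho_k(j,t)$ and implies each $b_n$ is $4\pi$-periodic in $\mu_*$.

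Forming the ratio $R_k(j,t) = \rho_k(j,t)/[\rho_k(j,0)^{1-t}\rho_k(j,1)^t]$, the prefactor $(2\pi/k)^{m/2}$ cancels since $(1-t)+t = 1$. The exponential cancellation is the key point and relies on Proposition \ref{jghnbv}: along the $\hcal_0^\Gamma$-geodesic one has $u_t(\mu_*) = (1-t)u_0(\mu_*) + t\,u_1(\mu_*)$, and since $\mu_*^2/(8\pi)$ is independent of $t$, this gives the affine relation $\Phi_t(y_*(\mu_*,t)) = (1-t)\Phi_0(y_*(\mu_*,0)) + t\Phi_1(y_*(\mu_*,1))$, so $e^{-k\Phi_t(y_*)}$ cancels \emph{exactly} against the corresponding factors in the denominator. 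What survives is the determinant ratio, matching $R_\infty(\mu_*,t)$ as defined in (\ref{uyee}), multiplied by
$$\frac{1 + k^{-1}b_1(\mu_*,t)+\cdots}{\bigl(1+k^{-1}b_1(\mu_*,0)+\cdots\bigr)^{1-t}\bigl(1+k^{-1}b_1(\mu_*,1)+\cdots\bigr)^{t}}.$$
Expanding this as a formal power series in $1/k$ yields the asserted expansion $R_k = R_\infty\bigl(1+k^{-1}a_1+k^{-2}a_2+\cdots\bigr)$, with each $a_\nu$ smooth in $(\mu_*,t)$ and $4\pi$-periodic in $\mu_*$.

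The main obstacle is verifying that the $O(k^{-\nu-1})$ remainder in the Laplace expansion is \emph{uniform} in $t\in[0,1]$ and $j\in(\Z/k\Z)^m$. Via the standard quantitative proof of Laplace's method (localize to a neighborhood of $y_*$ of scale $k^{-1/2+\varepsilon}$, Taylor-expand the phase and amplitude there, and bound the complement by exponential decay), the error is controlled by finitely many $C^N$ norms of $\Phi_t$ and $\det(I+\hess\psi_t)$ together with a uniform positive lower bound on $\hess\varphi_t(y_*(\mu_*,t))$. By the $4\pi$-periodicity noted above, it suffices to consider $\mu_*\in[-2\pi,2\pi]^m$; combined with smoothness of the $\hcal_0^\Gamma$-geodesic $\psi_t$ (guaranteed by the Legendre linearization in Proposition \ref{jghnbv}) on the compact parameter set $[0,1]\times M/\Gamma$, all these norms and lower bounds are indeed uniform. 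This is exactly where the Abelian setting improves on the toric case of \cite{SoZ}: there the moment polytope has a boundary along which $\hess_\mu u_t$ degenerates, whereas here $M/\Gamma$ is a compact torus without boundary and $\hess_\mu u_t$ remains uniformly bounded above and below.
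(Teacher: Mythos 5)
Your proposal is correct and follows essentially the same route as the paper: a Laplace/stationary-phase expansion of the Gaussian integral (\ref{fdsfs}) for the norming constant, exact cancellation of the exponential factors via the linearity $u_t=(1-t)u_0+tu_1$ of Proposition \ref{jghnbv}, and formal division of the resulting $1+k^{-1}b_1+\cdots$ series; the only difference is that you perform the expansion in the $y$ variable while the paper first changes variables to $\mu$ via the Legendre transform, which is equivalent. One caveat: your (correct) computation makes the surviving determinant ratio equal to $\bigl((\det\hess_\mu u_0)^{1-t}(\det\hess_\mu u_1)^t/\det\hess_\mu u_t\bigr)^{1/2}$, i.e.\ the reciprocal of (\ref{uyee}) as literally printed; this traces back to the paper's own display (\ref{t}), where $\sqrt{\det \hess u}$ appears in the numerator even though the stationary-phase prefactor $(\det(k\hess\Psi/2\pi))^{-1/2}$ places it in the denominator, so the mismatch reflects an inconsistency in the paper rather than a gap in your argument. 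Your treatment of uniformity in $t$ and $j$ (localization at scale $k^{-1/2+\varepsilon}$, reduction to a compact fundamental domain by periodicity, compactness of $[0,1]\times M/\Gamma$) is also more explicit than the paper's, which simply asserts that the $O$-symbol is uniform.
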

\begin{proof}
The periodicity of $R_{\infty}(\mu,t)$ is easy to see since the map $\mu: y\rightarrow \nabla_y \phi$ induces a map from $M$ to $M/\Gamma$, thus all functions in $\mu$ variables will be periodic.

First from (\ref{poijk}), we have,
\begin{equation} \label{uvbctye} d\mu=(4\pi)^m\det(I+\hess \psi_t) dy.\end{equation}

Now plug (\ref{cx}), (\ref{sdsds}) and (\ref{uvbctye}) into (\ref{fdsfs}), then we can rewrite the norming constant $\rho_{k}(j,t)$ as  \begin{equation}\label{dhghgndh}\rho_{k}(j,t)=e^{-2\pi \frac{j^2}{k}}\int_{\R^m}e^{-k(\mu \cdot \frac{\partial u}{\partial \mu}-u+\frac{4\pi j}{k}\cdot \frac{\partial u}{\partial \mu})}d\mu.\end{equation} Hence, by definition, we can rewrite $R_k(t,j)$ as
$$R_k(j,t)=\frac{ \int _{\R^m}e^{-k\pi (\mu\cdot \frac{\partial u}{\partial \mu}-u+\frac{4\pi j}{k}\cdot\frac{\partial u}{\partial \mu})}d\mu}{  (\int _{\R^m}{e^{-k\pi (\mu \cdot \frac{\partial u_{0}}{\partial \mu}-u_0+\frac{4\pi j}{k}\cdot\frac{\partial u_0}{\partial \mu})}}d\mu)^{1-t}(\int _{\R^m}{e^{-k\pi (\mu\cdot \frac{\partial u_{1}}{\partial \mu}-u_0+\frac{4\pi j}{k}\cdot\frac{\partial u_1}{\partial \mu})}}d\mu)^t}.$$
Recall the stationary phase method (Theorem 7.7.5 in \cite{H}):
\begin{equation}\label{polk}\int u(x)e^{ik \Psi(x)}dx=\frac{e^{ik\Psi(x)}}{\sqrt{\det(k\hess \Psi (x)/2\pi i)}}
\sum_{\lambda= 0}^{\infty} k^{ - \lambda}  L_\lambda u|_{x=x'} \end{equation}
where $x'$ is the critical point of $\Psi$, Im$ \Psi \geq 0$ and $L_\lambda$ is a differential operator of order $2\lambda$.

Note that in \cite{H}, $u(x)$ is assumed to has compact support, but in fact this formula is true for any $u(x)\in C^\infty(\R^m)$. The strategy is to choose a cut-off function  $\chi$ in a neighborhood of $x'$ and rewrite the amplitude $u$ to be $\chi u+(1-\chi) u$, then separate the integration into two parts correspondingly. To the integration with the amplitude $\chi u$, we use the formula of stationary phase method directly; to the second part, by Theorem 1.1.4 in \cite{So}, is $O(k^{-\infty})$.

To our case, note that the hypotheses of \cite{H} are
satisfied since we can add some constant to ensure our phase function has positive imaginary part. Now the critical point of the phase $\Psi=\mu\cdot\frac{\partial u}{\partial \mu}-u+\frac{4\pi j}{k}\cdot\frac{\partial u}{\partial \mu}$ satisfies: $( \mu'+\frac{4\pi j}{k})\cdot \hess u=0$.  Thus the critical point of the phase is given by $\mu'=-\frac{4\pi j}{k}$ since the matrix $\hess u>0$ .
And the Hessian of the phase at the critical point is $\hess \Psi|_{\mu=\mu'}=\hess u(\mu',t)>cI$. Thus by the formula of the stationary phase method, we have
\begin{equation} \begin{array}{lll}\label{t}& &\int _{\R^m}e^{-k (\mu\cdot\frac{\partial u}{\partial \mu}-u+\frac{4\pi j}{k}\cdot\frac{\partial u}{\partial \mu})}d\mu\\ && \\
& = & k^{-\frac{m}{2}}(e^{-k (\mu\cdot\frac{\partial u}{\partial \mu}-u+\frac{4\pi j}{k}\cdot\frac{\partial u}{\partial \mu})}\sqrt{\det \hess u})(1+k^{-1}L_1(t,\mu)+k^{-2}L_2(t,\mu)\cdots)|_{\mu'=-\frac{4\pi j}{k}} \\ && \\
& = & k^{-\frac{m}{2}}(e^{k u}\sqrt{\det \hess u})(1+k^{-1}L_1(t,\mu)+k^{-2}L_2(t,\mu)\cdots)|_{\mu'=-\frac{4\pi j}{k}}. \end{array} \end{equation}
where each $L_{\lambda}$ is a smooth function of $(\mu,t)$ and $4\pi$-periodic in $\mu$ for any fixed $t$. 


Now we can get the following expression of $R_k(j,t)$ by expanding each term in denominator and numerator:
\begin{equation} \begin{array}{lll}R_k(j,t)& = & e^{k\pi(u-(1-t)u_0-tu_1)}(\frac{\det \hess u}{(\det \hess u_{0})^{1-t}(\det \hess u_{1})^t})^{1/2}\frac{1+k^{-1}L_1(t,\mu)+\cdots}{(1+k^{-1}L_1(0,\mu)+\cdots)^{1-t}(1+k^{-1}L_1(1,\mu)+\cdots)^t}|_{\mu=-\frac{4\pi j}{k}} \\ && \\
& = & R_{\infty}(\mu,t)(1+k^{-1}a_1+k^{-2}a_2+ \ldots +k^{-\nu}a_{\nu})|_{\mu=-\frac{4\pi j}{k}}+O(k^{-\nu-1}) . \end{array} \end{equation}
In the last step, we plug in the identity (\ref{linar}). Then we apply the Taylor expansion $(1+x)^\gamma=1+\gamma x+ \cdots$ to the term $(1+k^{-1}L_1(t,\mu)+\cdots)(1+k^{-1}L_1(0,\mu)+\cdots)^{t-1}(1+k^{-1}L_1(1,\mu)+\cdots)^{-t}$, choosing $\gamma$ as $t-1$ and $-t$. If we expand these three terms completely, we will  get the complete asymptotics, and we can compute each term step by step. For example, the first term is $1$ and the second term is $k^{-1}(L_1(t,\mu)-(1-t)L_{1}(0,\mu)-tL_1(1,\mu))$. Moreover, $a_{\nu}$ is a polynomial of  $t$ and $L_{\lambda}(t,\mu)$ for some $\lambda$, hence each $a_{\nu}$ is smooth and uniformly bounded on $[0,1]\times M$, and periodic for any fixed $t$ . Furthermore, if we combine this with the fact that $R_{\infty}(\mu,t)$ is uniformly bounded, then the error term $R_{\infty}(\mu,t)a_{\lambda+1}$ is uniformly bounded, i.e., the symbol $O$ is uniformly bounded.
\end{proof}

\section{Generalized Bernstein Polynomial} \label{dddsgg}
In this section, we will prove the Lemma \ref{ddgsgs}. We first introduce the definition and some basic properties of the Bergman kernel, refer to \cite{SZ,Z1,Z2} for more background.

Let $(L,h) \to (M,\omega)$ be a positive holomorphic line bundle
over a compact \kahler manifold of complex dimension $m$. We assume
$\omega=-\frac{\sqrt{-1}}{2}\partial\bar{\partial} \log |{s(z)}|_{h}^{2}$,
where $s(z)$ is a local holomorphic frame. We now define the Bergman kernel as the orthogonal projection from the $L^2$ integral sections to the holomorphic sections:
\begin{equation}\Pi_{k}: L^{2}(M,L^{k})\rightarrow H^{0}(M,L^{k}).\end{equation}
Furthermore, if $\left\{s_{j}^k\right \}_{j=0} ^{d_k}$ is an orthonormal basis of  $H^{0}(M,L^{k})$ with respect to the inner product defined by (\ref{dsldd}), then \begin{equation}\label{fghf}\Pi_{k}(z,w)=\sum_{j=0}^{d_k}s_j^k(z)\otimes \overline{s_j^k(w)}, \end{equation}
where $d_k+1$=dim $H^{0}(M,L^{k})$.
The following holds for any $m$-dimensional \kahler manifold \cite{BBS,BS, SZ}:
\begin{prop}For any $C^{\infty}$ positive hermitian line bundle $(L,h)$, we have:
\begin{equation}\label{sfgs} \Pi_k(z,w)= e^{k(\phi(z, \bar{w})-\frac{1}{2 }
(\phi(z)+\phi(w)))}A_{k}(z,w) +O( k^{-\infty}),
\end{equation}
where $\phi$ is the smooth local \kahler potential for $h$, $\phi(z, \bar{w })$ is the almost analytic extension of $\phi(z)$ and $A_{k}(z,w)= k^m(1+k^{-1}a_{1}(z,w)+\cdots)$ a semi-classical symbol of order $m$.
\end{prop}

\bigskip

Now we turn to the proof of Lemma \ref{ddgsgs}:
\begin{proof}Assume $M=\C^m/\Lambda$ where $\Lambda=\Z^m+i\Z^m$ and $L\rightarrow M$ is a principal polarization of $M$. Choose \kahler potential $\varphi(y)=2\pi y^2+4\pi\psi(y)$ as before. From Proposition \ref{dfghhg}, $\left\{\theta_j, j\in (\Z/k\Z)^m \right \}$ forms an orthogonal basis of $\Hk$ with respect to the Hermitian inner product defined by (\ref {dsldd}); thus by formula (\ref{fghf}), the Bergman kernel is given by:
\begin{equation}\label{kjhl}\Pi_k(z,w) = \sum_{j \in (\Z/k\Z)^m}\frac{\theta_j(z) \overline{\theta_j(w)}e^{-\frac{k\varphi(Im z)}{2}-\frac{k\varphi(Im w)}{2}}}{\|\theta_j\|^2_{h^k}}.\end{equation}
  For any function $f(x)\in C^\infty(\mathbb{T}^m)$, we can define the following translation operator
 $U: f(x)  \rightarrow f(x-\frac{1}{k})$ on the universal covering space.
If we consider this operator acting on the vector space $\Hk$ of holomorphic theta functions, then we have the following Weyl quantization \cite{K,KR}:
\begin{equation}Op_k(f)=\sum_{n\in \Z^m}\widehat{f}(n)U^{n},\end{equation} where $\widehat{f}(n)$ is the Fourier coefficients of $f$.
Now apply $U$ to theta functions:
$$\theta_j(z)=\sum_{n\in \Z ^m}e^{-\pi \frac{(j+kn)^2}{k} +2\pi i(j+kn)\cdot z} ,$$
then for any $x \in \R^m$, it's easy to see that:
\begin{equation}\label{kjhbv}U (\theta_{j}(z+x))=e^{-2\pi i \frac{j}{k}} \theta_j(z+x),\end{equation} where $e^{-2\pi i \frac{j}{k}}=e^{-2\pi i \frac{j_1}{k}}\cdots e^{-2\pi i \frac{j_m}{k}}$.
 Next apply $Op_k(f)$ to theta functions, we have: \begin{equation}\label{kjdghl}Op_k(f)\theta_j(z+x)=\left(\sum_{n\in \Z^m}\widehat{f}(n)e^{-2\pi i \frac{j}{k}\cdot n}\right) \theta_j(z+x)=f(-\frac{j}{k})\theta_j(z+x).\end{equation}
Now apply this operator to the Bergman kernel off the diagonal (\ref{kjhl}), we have:
\begin{equation} \begin{array}{lll}Op_k(f)\Pi_k(z+x,w)|_{x=0} & = &Op_k(f)\sum_{j\in (\Z/k\Z)^m}\frac{\theta_j(z+x) \overline{\theta_j(w)}e^{-\frac{k\varphi(Im z)}{2}-\frac{k\varphi(Im w)}{2}}}{\|\theta_j\|^2_{h^k}}|_{x=0}\\ && \\&= & \sum_{j\in (\Z/k\Z)^m}f(-\frac{j}{k})\frac{\theta_j(z) \overline{\theta_j(w)}e^{-\frac{k\varphi(Im z)}{2}-\frac{k\varphi(Im w)}{2}}}{\|\theta_j\|^2_{h^k}}.
  \end{array}
  \end{equation}
  Here we use the fact that $\varphi (Im(z+x))=\varphi(Imz)=\varphi(y)$.
Now choose $z=w$, we have:
\begin{equation}\label{iuy}\frac{1}{k^m}\sum_{j\in (\Z/k\Z)^m}f(-\frac{j}{k})\frac{|\theta_j|_{h^k}}{\|\theta_j\|^2_{h^k}}=\frac{1}{k^m} Op_k(f)\Pi_k(z+x,z)|_{x=0}.\end{equation}
 Now we get the complete asymptotics of $\Pi_k(z+x,z)$ as follows: by assumption, our \kahler potential only depends on $y=$Im$z$, i.e.,  $\varphi(z)=\varphi(y)=\varphi(\frac{z-\bar z}{2i})$, thus the almost analytic extension of $\varphi$ is given by \begin{equation}\label{dsv}\varphi(z,\bar w)=\varphi(\frac{z-\bar w}{2i}).\end{equation} Hence, formula (\ref{sfgs}) reads:
 \begin{equation}\label{ddgdewd} \begin{array}{lll}\Pi_k(z+x,z) &= & e^{k(\varphi(z+x,\bar z)-\frac{1}{2 }
(\varphi(z+x)+\varphi(z)))}A_{k}(z+x,z)  \\ && \\&= &  e^{k(\varphi(z+x,\bar z)-\varphi(z))}A_{k}(z+x,z), \end{array}
  \end{equation}
  where  $A_{k}(z+x,z)= k^m(1+k^{-1}a_{1}(z+x,z)+\cdots)$.
  In the last step, we use the fact that $\varphi(z+x)=\varphi(z)=\varphi(Im z)$ again.

 Now apply the operator $\frac{1}{k^m} Op_k(f)$ on both sides of (\ref{ddgdewd}),
\begin{equation}\label{ddddgd} \begin{array}{lll} \frac{1}{k^m} Op_k(f)\Pi_k(z+x,z)|_{x=0} &= &\frac{1}{k^m} \sum_{n\in \Z^m} \hat f(n)U^{n}\Pi_k(z+x,z)|_{x=0} \\ && \\&= & \frac{1}{k^m}  \sum_{n\in \Z^m} \hat f(n)\Pi_k(z-\frac{n}{k},z) \\ && \\&= & \frac{1}{k^m}  \sum_{n\in \Z^m}  \hat f(n) e^{k(\varphi(z-\frac{n}{k},\bar z)-\varphi(z))}A_{k}(z-\frac{n}{k},z) \\ && \\&= & \frac{1}{k^m} \sum_{n\in \Z^m} \hat f(n) e^{k(\varphi(y- \frac{n}{ 2ik})-\varphi(y))}A_{k}(z-\frac{n}{k},z). \end{array}
  \end{equation}
  In the last step, by identity (\ref{dsv}), the almost analytic extension $\varphi(z-\frac{n}{k},\bar z)=\varphi(\frac{z-\frac{n}{k}-\bar z}{2i})=\varphi(y- \frac{n}{ 2ki})$.

To the last equation in (\ref{ddddgd}), if we apply the Taylor expansion to $e^{k(\varphi(y- \frac{n}{ 2ik})-\varphi(y))}$ and use the complete asymptotic of  $A_{k}(z-\frac{n}{k},z)= k^m(1+k^{-1}a_{1}(z-\frac{n}{k},z)+\cdots)$, we will get the complete asymptotic of $Op_k(f)\Pi(z+x,z)|_{x=0}$.  For example, we can compute the leading term as follows: first, $e^{k(\varphi(y- \frac{n}{ 2ik})-\varphi(y))}=e^{ -\nabla\varphi\cdot \frac{n}{ 2i}+O(k^{-1})}=e^{ -\nabla\varphi\cdot \frac{n}{ 2i}}+O(k^{-1})$; second, $\frac{1}{k^m} A_{k}(z-\frac{n}{k},z)= 1+O(k^{-1})$. Hence the leading term is given by, \begin{equation}\sum_{n\in \Z^m}\hat f(n)e^{ -\nabla\varphi\cdot \frac{n}{ 2i}} =f(\frac{\nabla \varphi}{4\pi})=f(\mu),\end{equation}
where $\mu=y+\nabla\psi$.  Hence, we can get the complete asymptotics step by step if we further expand $e^{k(\varphi(y- \frac{n}{ 2ik})-\varphi(y))}$ and $A_k$.  \end{proof}

 As a remark, if we replace $f$ and $h$ to be a path of smooth periodic function $f_t$ and any path $h_t$ in $\hcal_0^{\Gamma}$, then the lemma still holds with the leading term $f_t(\mu)$. Furthermore, we can differentiate the complete asymptotics with respect to $t$ on both sides.
 \section{$C^{\infty}$ convergence of Bergman geodesics} \label{dsss}
In this section, we will apply the Regularity Lemma and the generalized Bernstein Polynomial Lemma to prove Lemma \ref{oiut}:
\begin{proof}We first apply Lemma \ref{jhgf}, and denote $A_{\nu}(\mu,t)\sim R_{\infty}(\mu,t)a_{\nu}(\mu)$, then $A_{\nu}(\mu,t)$ is periodic in $\mu$ since $R_{\infty}(\mu,t)$ and $a_{\nu}(\mu)$ are periodic.
Then:
 $$\begin{array}{lll}& & \sum_{j\in (\Z/k\Z)^m}R_k(j,t)\frac{|\theta_j|^2_{h_t^k}}{\|\theta_j\|^2_{h_t^k}} \\ && \\&\sim & \sum_{j\in (\Z/k\Z)^m} R_{\infty}(\mu,t)(1+k^{-1}a_1+k^{-2}a_2+ \cdots )|_{\mu=-\frac{4\pi j}{k}}\frac{|\theta_j|^2_{h_t^k}}{\|\theta_j\|^2_{h_t^k}}
 \\ && \\
& \sim & \sum_{j\in (\Z/k\Z)^m} R_{\infty}(-\frac{4\pi j}{k},t)\frac{|\theta_j|^2_{h_t^k}}{\|\theta_j\|^2_{h_t^k}}+\frac{1}{k} \sum_{j\in (\Z/k\Z)^m}A_{1}(-\frac{4\pi j}{k},t)\frac{|\theta_j|^2_{h_t^k}}{\|\theta_j\|^2_{h_t^k}}+\cdots .
\end{array} $$
Since $R_{\infty}(\mu,t)$ is periodic with period $4\pi$, then $R_{\infty}(4\pi \mu)$ will be periodic with period $1$, thus if we apply Lemma  \ref{ddgsgs} to function $R_{\infty}(4\pi \mu)$, we have:    $$\sum_{j\in (\Z/k\Z)^m} R_{\infty}(-\frac{4\pi j}{k},t)\frac{|\theta_j|^2_{h_t^k}}{\|\theta_j\|^2_{h_t^k}}\sim k^m(R_{\infty}(\mu,t)+k^{-1}b_{11}(\mu,t)+\cdots),$$
where $\mu=4\pi(y+\nabla \psi_t)$.
In fact, we can apply Lemma \ref{ddgsgs} to each coefficient, e.g.,
 $$\frac{1}{k}\sum_{j\in (\Z/k\Z)^m}A_{1}(-\frac{4\pi j}{k},t)\frac{|\theta_j|^2_{h_t^k}}{\|\theta_j\|^2_{h_t^k}}\sim k^m(k^{-1}A_{1}(\mu,t)+\cdots)$$
and so on, then we have the complete asymptotics:
$$ \begin{array}{lll}\sum_{j\in (\Z/k\Z)^m}R_k(j,t)\frac{|\theta_j|^2_{h_t^k}}{\|\theta_j\|^2_{h_t^k}}&\sim & k^m(R_{\infty}(\mu,t)+k^{-1}(A_{1}+b_{11})+\cdots).
\end{array} $$
We can divide $R_{\infty}$ since in Lemma \ref{jhgf}, we prove this term is strictly positive, uniformly bounded and smooth. Hence,
 $$\begin{array}{lll}&&\frac{1}{k}\log\sum_{j\in (\Z/k\Z)^m}R_k(j,t)\frac{|\theta_j|^2_{h_t^k}}{\|\theta_j\|^2_{h_t^k}} \\ && \\
& \sim & k^{-1}\log [k^mR_{\infty}(\mu,t)(1+\frac{1}{k}\frac{A_{1}+b_{11}}{R_{\infty}}+\cdots)]
 \\ && \\
& \sim &mk^{-1}\log k+k^{-1}\log R_{\infty}+k^{-1}\log (1+\frac{1}{k}\frac{A_{1}+b_{11}}{R_{\infty}}+\cdots)
 \\ && \\
& \sim & mk^{-1}\log k+k^{-1}\log R_{\infty}+k^{-2}\frac{A_{1}+b_{11}}{R_{\infty}}+ \cdots .
\end{array} $$
In the last step, we use the Taylor expansion $\log(1+x)\sim x-\frac{x^2}{2}+\cdots$. Moreover,   $$\frac{1}{k}\log \sum_{j\in (\Z/k\Z)^m}R_k(j,t)\frac{|\theta_j|^2_{h_t^k}}{\|\theta_j\|^2_{h_t^k}}\longrightarrow 0$$ in $C^{\infty}$ topology as $k \rightarrow \infty$. This implies that the Bergman geodesics converge to the geodesic in the \kahler space in $C^{\infty}$ topology.
\end{proof}
\section{ General Lattice }\label{general}

In this section, we will sketch the proof of our main theorem for any principally polarized Abelian variety.

 Let $M=\C^m/\Lambda$ where $\Lambda=$ Span$_{\Z}\{\lambda_1,...,\lambda_{2m}\}$ is a lattice in $\C^m$ with its normalized period matrix given by
$\Omega :=[I,Z]$ where $Z^t=Z$ and $Im Z>0$. 
 Choose $\{x_1,...,x_m,y_1,...,y_m\}$ as the coordinates of the basis dual to  $\{\lambda_1,...,\lambda_{2m}\}$ such that $z_{\alpha}=x_{\alpha}+\sum _{\beta=1}^mZ_{\alpha \beta}y_{\beta}$ and $\bar z_{\alpha}=x_{\alpha}+\sum _{\beta=1}^m\bar Z_{\alpha \beta}y_{\beta}$ \cite{GH}.

 Assume $L\rightarrow M$ is a principal polarization of $M$, then the holomorphic sections of $\Hk$ are given by theta functions (\ref{a}).

 Now consider the \kahler potential in the form:
\begin{equation}\varphi(t,y)=2\pi y  X y^T+4\pi \psi(t,X y^T)\end{equation}
where $y=(y_1,...,y_m)$, $X=Im Z$.  We assume $\varphi$ is convex in $y$ and $\psi$ is smooth on $\R^m$ and periodic with period $1$ in each variable $y_j$ for any fixed $t$.
 Then it's easy to check that such \kahler potential satisfies conditions (\ref{iuyth}).

By choosing such \kahler potential, Proposition \ref{dfghhg} still holds depending on the following computations (see also \cite{FMN}):
 $$\langle\theta_{l'}(z,\Omega ),\theta_{l}(z,\Omega )\rangle_{h^k_t}= \int_{[0,1]^m\times [0,1]^m} \left(\sum_{n'\in \Z^m} e^{-i \pi(l+kn')\cdot\frac{Z}{k}(l+kn')^T}e^{-2\pi i(l+kn')\cdot z}\right)\cdot$$ $$\left(\sum_{n\in \Z^m} e^{-i \pi(l+kn)\cdot\frac{\bar Z}{k}(l+kn)}e^{-2\pi i(l+kn)^T\cdot\bar z}\right)\cdot e^{-2k\pi y X y^T-4k\pi \psi(t,X y)}\cdot \det \hess \varphi_t(y) dxdy$$
 $$=\delta_{l,l'}\sum_{n\in \Z^m} \int_{[0,1]^m} e^{-2k\pi(y+\frac{l+kn}{k})\cdot X(y+\frac{l+kn}{k})^T} e^{-4k\pi \psi(t,X y)}\cdot \det \hess \varphi_t(y) dy.$$ Thus  $\left\{\theta_l(z,\Omega ), \,\, l\in  (\Z/k\Z)^m \right\}$ forms an orthogonal basis of $\Hk$. Furthermore,
\begin{equation}\label{eqati}\|\theta_l(z,\Omega )\|^2_{h^k_t}=\int _{\R^m}e^{-2k\pi(y+\frac{l}{k})X(y+\frac{l}{k})^T}e^{-4k\pi \psi(t,X y)}\cdot  \det \hess \varphi_t(y)dy.\end{equation}
Then all main steps in the model case can be extended to the general case immediately:
\begin{itemize}
\item
Define $u(t,y)$ as the Legendre transform of $\varphi(t,y)$ with respect to $y$ variables for any fixed $t$, then we can still linearize $u(t,y)$ along the geodesics since the Proposition \ref{jghnbv} is only the property of convex functions (p.106 in \cite{R}).
 \item
By substituting $\varphi$ by the Legendre transform $u(t,y)$, we rewrite (\ref{eqati}) as
$$e^{-2k\pi (\frac{j}{k})X(\frac{j}{k})^T}\int_{\R^m} e^{-4k\pi (\nabla u \cdot X \cdot (\frac{j}{k})^T+u-\mu \cdot \nabla u)}d\mu, $$
where $\mu=\nabla \varphi$.

By applying the stationary phase method, we can get the complete asymptotics of this integration evalued at $\mu'=-X\cdot (\frac{4\pi j}{k})^T$ which is the critical point of the phase function. Thus $R_k(j,t)$ which is the ratio of the norming constants will be
 asymptotic to $R_\infty(\mu,t)$ as $$R_k(j,t)\sim R_\infty(\mu,t)(1+k^{-1}a_1(\mu,t)+\cdots )|_{\mu=-X\cdot (\frac{4\pi j}{k})^T}$$ If we change variable as $ \mu\cdot (4\pi X)^{-1} =\nu$, then $R_\infty(\nu,t)$ and each $a_j(\nu,t)$ are smooth functions over $\R^m$ and periodic with period $1$ in variables $\nu$ for any fixed $t$.


\item
In the general case, we define the operator $U: f(x)\rightarrow f(x-\frac{1}{k})$. Then for general theta functions $\theta_l(z,\Omega )$, we still have: $$U(\theta_l(z,\Omega ))=e^{-2\pi i \frac{l}{k}} \theta_l(z,\Omega ),$$ where $e^{-2\pi i \frac{l}{k}}$ denotes $e^{-2\pi i \frac{l_1}{k}}\cdots e^{-2\pi i \frac{l_m}{k}}$.  Then by applying the Weyl quantization to the Bergman kernel and using Fourier transform and Taylor expansion, for any $f(4\pi X \cdot x^T)\in C^\infty(\mathbb R^m)$ which is also periodic with period $1$ in $x$ variables, following the proof in section \ref{dddsgg}, we can prove
$$\frac{1}{k^m}\sum_{j\in (\Z/k\Z)^m}f(- X\cdot(\frac{4\pi j}{k})^T)\frac{ |\theta_j(z,\Omega )|_{h^k}}{\| \theta_j(z,\Omega )\|_{h^k}}\sim f(\mu)+k^{-1}b_1(\mu)+\cdots ,$$ where $\mu= \nabla \varphi$ . Our main result  with the same formula as the model case holds if we apply this formula to $R_\infty(\mu,t)$ and each $a_j(\mu,t)$ and follow the steps in section \ref{dsss}.
\end{itemize}
Thus our main result holds for any  principally polarized Abelian variety.

\section{Complete asymptotics of harmonic maps}\label{testharmonic}
The proof of Theorem \ref{harmin} is similar to the one in \cite{RZ}. For brevity, we just sketch the main steps for the model case $M=\C^m/\Lambda$,
 where $\Lambda=\Z^m+i\Z^m$.

The crucial formula in the toric case is the identity (4.1) in \cite{RZ}, while in our Abelian case, we modify it to be
\begin{equation}\label{comp}\begin{array}{lll}&& \varphi_k(q,z)-\varphi(q,z) \\ && \\
&= &\frac{1}{k} \log 	\sum_{j\in(\Z/k\Z)^m} \exp \left(\int_{\partial N}\partial_{\nu(p)} G(p,q)\log  \|\theta_j(z)\|^2_{h_\psi^k(p)}dV_{\partial N}(p)\right) |\theta_j(z)|^2_{h_\varphi^k(q)},\end{array}\end{equation}
where $G(q,p)$ denotes the positive Dirichlet Green kernel
for the Laplacian $\triangle_ {N,g}$, $dV_{\partial N}$ is the
induced measure on $\partial N$ by restricting the Riemannian volume form
$dV_{N} $ from $N$ to $\partial N$ and $\nu(q)$ is an outward unit normal to
$\partial N$.
Then to prove Theorem \ref{harmin} is equivalent to prove that (\ref{comp}) admits complete asymptotics.
Denote $$K_k(q,j)=\exp \left(-\int_{\partial N}\partial_{\nu(p)} G(p,q)\log  \frac{ \|\theta_j(z)\|^2_{h_\varphi^k(q)}}{\|\theta_j(z)\|^2_{h_\psi^k(p)}}dV_{\partial N}(p)\right)$$
Then we can rewrite (\ref{comp}) as \begin{equation}\label{codrmp}\varphi_k(q,z)-\varphi(q,z) =\frac{1}{k} \log 	 \sum_{j\in(\Z/k\Z)^m} K_k(q,j) \frac{|\theta_j(z)|^2_{h_\varphi^k(q)}}{\|\theta_j(z)\|^2_{h_\varphi^k(q)}}\end{equation}
Put $u_q:=u_{\varphi(q)}=u(q,\cdot)$ is the Legendre transform of $\varphi_q(y) \in \hcal_0^\Gamma$ for $q\in N$. Denote \begin{equation}\label{ddgssd}K_\infty(q,x)=\exp \left(-\frac{1}{2}\int_{\partial N}\partial_{\nu(p)} G(p,q)\log  \frac{\det \hess u_q(x)}{\det \hess u_p(x)}dV_{\partial N}(p)\right)\end{equation} where $x=\nabla \varphi$.

From the proof of the Regularity Lemma \ref{jhgf}, if we plug in the complete asymptotic expansion of the norming constants $\|\theta_j(z)\|^2_{h_\varphi^k(q)}$ and $\|\theta_j(z)\|^2_{h_\psi^k(p)}$, we have the following complete asymptotic expansion,
\begin{equation}\label{dfddgssd}K_k(q,j)=K_\infty(q,x)+k^{-1}b_1(q,x)+\cdots |_{x=-\frac{4\pi j}{k}}\end{equation}

If we plug  (\ref{dfddgssd}) into the right hand side of (\ref{codrmp}), we obtain the following expansion,
$$\begin{array}{lll}\frac{1}{k}\log \left(\sum_j K_\infty(q,\frac{-4\pi j}{k}) \frac{|\theta_j(z)|^2_{h_\varphi^k(q)}}{\|\theta_j(z)\|^2_{h_\varphi^k(q)}}+k^{-1}\sum_j b_1(q,\frac{-4\pi j}{k}) \frac{|\theta_j(z)|^2_{h_\varphi^k(q)}}{\|\theta_j(z)\|^2_{h_\varphi^k(q)}}+\cdots \right).\end{array}$$

Hence, Theorem \ref{harmin} follows if we apply the generalized Bernstein Lemma \ref{ddgsgs} to each summation above and follow the steps in section \ref{dsss}.


\begin{thebibliography}{HHHH}
\bibitem[AT]{AT}C. Arezzo and G. Tian,\emph{ Infinite geodesic rays in the space of \kahler potentials}, Ann. Sc. Norm. Super. Pisa Cl. Sci. (5) 2 (2003), no. 4, 617-630.
\bibitem[B]{B}B. Berndtsson, \emph{Probability measures related to geodesics in the space of \kahler metrics}, arxiv:0907.1806v2.
\bibitem[BBS]{BBS} R. Berman, B.  Berndtsson and J.  Sj\"ostrand,
\emph{A direct approach to Bergman kernel asymptotics for positive line
bundles}.  Ark. Mat. 46 (2008), no. 2, 197-217.\bibitem[BS]{BS} L. Boutet de Monvel and J. Sj\"{o}strand, \emph{Sur la singularite des noyaux de Bergman et de Szeg\"{o}},
Asterisque 34-35 (1976), 123-164.

\bibitem[BV]{BV} N. Berline and M. Vergne, \emph{Local Euler-Maclaurin formula for polytopes}, Moscow
Math. J. 7 (2007), no. 3, 355-386.
\bibitem[C]{C} X. X. Chen, \emph{The space of \kahler metrics}, J. Differential Geom. 56 (2000), 189-234.
\bibitem[D]{D}S. K. Donaldson, \emph{Symmetric spaces, K\"{a}hler geometry and Hamiltonian dynamics}, in:
Northern California Symplectic Geometry Seminar, American
Mathematical Society, 1999, pp. 13-33.
\bibitem[F]{F} R. Feng, \emph{Szasz analytic functions and noncompact \kahler toric manifolds}, to appear in Journal of Geometric Analysis.
\bibitem[FMN]{FMN}C. A. Florentino, J. M. Mour\~{a}o and J. P. Nunes, \emph{Coherent State Transforms and Abelian Varieties}, Journal of Functional Analysis, Volume 192, Number 2, July 2002, pp. 410-424(15).
\bibitem[G]{G}D. Guan,\emph{ On modified Mabuchi functional and Mabuchi moduli space of K\"{a}hler metrics on toric
bundles}, Math. Res. Lett. 6 (1999), no. 5-6, 547-555.
\bibitem[GH]{GH}G. Griffiths and J. Harris,\emph{ Principles of Algebraic Geometry}, Wiley-Interscience, (1978).
\bibitem[GS]{GS} V. Guillemin and S.  Sternberg, \emph{ Riemann sums over polytopes},
 Ann. Inst. Fourier, volume. 57, no. 7 (2007), 2183-2195.
\bibitem[H]{H}L. H\"{o}rmander, \emph{The Analysis of Linear Partial Differential Operators}, Grund. Math. Wiss. 256,
Springer-Verlag, N.Y. (1983).
  \bibitem[K]{K}D. Kelmer, \emph{Arithmetic Quantum Unique Ergodicity for Symplectic Linear Maps of the Multidimensional Torus}, arxiv:0510.079v5, to appear in Ann. of Math.
 \bibitem[KR]{KR} P. Kurlberg and Z. Rudnick, \emph{On quantum ergodicity for linear maps of the torus}, Commun.
Math. Phys. 222, 201-227 (2001).

  \bibitem[M]{M}T. Mabuchi, \emph{Some symplectic geometry on compact \kahler manifolds}, Osaka J.
Math. 24 (1987) 227-252.
 \bibitem[Mu]{Mu}D. Mumford, \emph{Tata lectures on theta. I},  Progress in Mathematics, vol. 28, Birkh\"{a}user, Boston, 1983.
 \bibitem[PS]{PS} D. H. Phong and J. Sturm,  {\it
 The Monge-Amp\`ere operator and geodesics in the space of
\kahler potentials}, Invent. Math. 166 (2006), no. 1, 125-149.
 \bibitem[R]{R}Y. A. Rubinstein, \emph{Geometric Quantization and Dynamical Constructions on the Space of \kahler metrics}, PhD.Thesis, MIT,2008.
      \bibitem[RZ]{RZ}Y. A. Rubinstein and S. Zelditch, \emph{Bergman approximations of harmonic maps into the space of \kahler metrics on toric varieties},  J. Symp. Geom. Volume 8, Number 3 (2010), 239-265.
\bibitem[So]{So} C. Sogge, \emph{Fourier integrals in classical analysis}, Cambridge Tracts in Mathematics (1993).
\bibitem[S]{S} S. Semmes, \emph{Complex Monge-Amp\`{e}re and symplectic manifolds}, Amer. J. Math.
114 (1992) 495-550.
 \bibitem[SoZ]{SoZ}J. Song and S. Zelditch, \emph{Bergman metrics and geodesics in the space of \kahler metrics on toric varieties}, Analysis \& PDE. Vol. 3 (2010), No. 3, 295-358.
      \bibitem[SoZ1]{SoZ1}J. Song and S. Zelditch, \emph{Convergence of Bergman geodesics on $\mathbb{CP}^1$}, Ann. Inst. Fourier(Grenoble) 57 (2007), no. 6, 2209-2237.
\bibitem[SZ]{SZ} B. Shiffman and S. Zelditch, \emph{Almost holomorphic sections of ample line bundles over symplectic
manifolds}, J. Reine Angew. Math. 544 (2002), 181-222.
 \bibitem[T]{T} G. Tian, \emph{On a set of polarized \kahler metrics on algebraic manifolds}, J. Differential Geom. 32 (1990),
Math. Volume 13, Number 4 (1963), 1171-1180.


    \bibitem[Y]{Y} S. T. Yau, \emph{Open problems in geometry}, Proc. Symp. Pure Math. 54, AMS Providence, RI (1993) 1-28.
  \bibitem[Z1]{Z1} S. Zelditch, \emph{Bernstein polynomials, Bergman kernels and toric K\"{a}hler varieties}, J. Symplectic Geom. Volume 7, Number 2 (2009), 51-76.
\bibitem[Z2]{Z2}S. Zelditch, \emph{Szeg\"{o} kernels and a theorem of Tian}, IMRN 6 (1998), 317-331.

\end{thebibliography}
 \end{document}